\title{How to measure multidimensional variation?}
\author{
    Gennaro Auricchio\thanks{gennaro.auricchio@unipd.it} \\ 
    Department of Mathematics,\\ University of Padua \\
    \and
    Paolo Giudici \thanks{paolo.giudici@unipv.it} \\
    Department of Economics,\\
    University of Pavia
    \and
    Giuseppe Toscani\thanks{giuseppe.toscani@unipv.it} \\
    Department of Mathematics,\\ University of Pavia \\
}
\newtheorem{theorem}{Theorem}
\newtheorem{remark}{Remark}
\newtheorem{definition}{Definition}
\def\bx{{\bf x}}
\def\bX{{\bf X}}
\def\bY{{\bf Y}}
\def\by{{\bf y}}
\def\bm{{\bf m}}
\def\R{\mathbb{R}}
\def \PP{\mathcal{P}}
\def \erre{\mathbb{R}}
\def \enne{\mathbb{N}}
\def\bx{{\bf x}}
\def\bX{{\bf X}}
\def\bK{{\bf K}}
\def\bR{{\bf R}}
\def\bY{{\bf Y}}
\def\by{{\bf y}}
\def\bm{{\bf m}}
\def\bc{{\bf c}}
\def\R{\mathbb{R}}
\newtheorem{corollary}{Corollary}
\date{}
\begin{document}



\maketitle

\begin{abstract}
The coefficient of variation, which measures the variability of a distribution from its mean, is not uniquely defined in the multidimensional case, and so is the multidimensional Gini index, which measures the inequality of a distribution in terms of the mean differences among its observations.  
In this paper, we connect these two notions of sparsity, and propose a multidimensional coefficient of variation based on a multidimensional  Gini index. We demonstrate that the proposed coefficient  possesses the properties of the univariate coefficient of variation. We also show its connection with the Voinov-Nikulin coefficient of variation, and compare it  with the other  multivariate coefficients  available in the literature.
\end{abstract}

\textbf{Keywords}: Multivariate distributions, Multivariate coefficient of inequality, Multivariate coefficient of variation.
\vskip.2cm

{AMS Subject Classification: 62H20, 62H99.}

\section{Introduction}
\label{sec:intro}

The coefficient of variation is the standard measure to summarize through a scalar value the variability of a set of points in a statistical distribution.
For one dimensional distributions the universally accepted definition of the coefficient of variation (CV) is: 
\begin{equation}
    \label{eq:std_CV}
    CV(\mu)=\frac{\sigma}{|m|},
\end{equation}
where $\sigma$ and $m$ are the standard deviation and the mean, respectively, of a statistical distribution $\mu$.
Albeit the coefficient in \eqref{eq:std_CV} has been used for more than a century to handle one dimensional data, there is still no universally accepted way to measure the variability of a multidimensional distribution.
Indeed, different definitions of a multivariate coefficient of variation are present in the statistical literature \cite{aerts2015multivariate}. 
Among them, only a few  enjoy the most basic property that the univariate coefficient of variation \eqref{eq:std_CV} possesses, such as being scale-invariant \cite{auricchio2024extending}.
A similar issue occurs for another measure of sparsity, the Gini index. 
The (univariate) Gini index is a measure of sparsity that summarises with a scalar value the inequality of a statistical distribution in terms of the mean difference between its values, normalised by their mean. 
Given a distribution $\mu$ supported over $[0,\infty)$ with mean $m$, the Gini index \cite{gini1914sulla,gini1921measurement} is defined as
\begin{equation}
\label{eq:Gini_intro}
    G(\mu):=\frac{1}{2|m|}\int_{\erre}\int_{\erre}|x-y|\mu(dx)\mu(dy).   
\end{equation}
On the one hand, the Gini index is a measure of sparsity, thus easier to interpret than the coefficient of variation: $G(\mu)$ is always between $0$ and $1$, where $0$ represents a status of perfect equality, while $1$ represent the maximum inequality.
On the other hand, the coefficient of variation is well-defined for every probability measure that has non-null mean, whereas the Gini index requires $\mu$ to be supported only on the positive half-line in order to attain a value between $0$ and $1$. 
Despite the differences, the coefficient of variation in \eqref{eq:std_CV} and the Gini index in \eqref{eq:Gini_intro} have many common traits, as they both aim to summarise the information about the sparsity of a distribution through a scalar value that does not depend on the unit of measurement.
These similarities become apparent when we consider a Gaussian distribution.
Let $\mu=\mathcal{N}(m,\sigma)$ be a Gaussian distribution with both $m,\sigma>0$. 
Then, through a simple computation, we have that
\[
G(\mu) = \frac{1}{2|m|}\int_{\erre}\int_{\erre}|x-y|\mu(dx)\mu(dy)=\frac 2{\sqrt\pi}\frac{\sigma}{|m|}.
\]
Thus, for a Gaussian distribution, the Gini index is proportional to the coefficient of variation.
Furthermore, if we consider an alternative ``squared'' definition of the Gini index:
\begin{equation}
\label{eq:G2_intro}
    G_2(\mu):=\Big(\frac{1}{2m^2}\int_{\erre}\int_{\erre}|x-y|^2\mu(dx)\mu(dy)\Big)^{\frac{1}{2}},
\end{equation}
based on the $L^2$ norm rather than the $L^1$ norm as in \eqref{eq:Gini_intro}, we have that
\begin{align*}
    G_2(\mu)&=\Big(\frac{1}{2m^2}\int_{\erre}\int_{\erre}(x^2-2xy+y^2)\mu(dx)\mu(dy)\Big)^{\frac{1}{2}}\\
    &=\frac{1}{\sqrt{2}|m|}\Big(\int_{\erre} x^2\mu(dx)-2m^2+\int_{\erre}y^2\mu(dy)\Big)^{\frac{1}{2}}\\
    &=\frac{1}{\sqrt{2}|m|}\Big(2\sigma^2\Big)^{\frac{1}{2}}=\frac{\sigma}{|m|},
\end{align*}
which shows that, for any distribution $\mu$, the squared Gini index $G_2$ coincides exactly with the coefficient of variation.
The above findings indicate that the Gini index $G_1$ and the coefficient of variation $CV$ are the $L^1$ and $L^2$ norms of the same function $\frac{|x-y|}{|m|}$.

\subsection*{Related Work}

The univariate coefficient of variation is a well-established measure of sparsity that has been widely adopted by the scientific community.
Despite its widespread use, a universally accepted method to extend this coefficient to distributions supported over high-dimensional spaces has yet to be developed.
This lack of clarity on how a multivariate coefficient of variation (MCV) should be defined and what properties it should enjoy lead to multiple extensions:
\begin{enumerate*}[label=(\roman*)]
    \item the Reyment's coefficient of variation \cite{reyment1960studies},
    \item the Van Valen's coefficient of variation \cite{van1974multivariate},
    \item the Albert and Zhang's coefficient of variation \cite{albert2010novel}, and
    \item the Voinov-Nikulin's coefficient of variation \cite{voinov2012unbiased}.
\end{enumerate*}
Although these coefficients have been recently unified under a general definition in \cite{colin2024towards}, they all exhibit different properties. 
For example, the Voinov-Nikulin's coefficient of variation is the only \textit{scale invariant} MCV, \textit{i.e.} it does not change if we rescale one or more of the components of the vectorial quantity under study by changing their respective units of measure \cite{aerts2015multivariate}.
The Gini index was firstly introduced by the Italian statistician Corrado Gini, \cite{gini1914sulla,gini1921measurement} along with the less famous index proposed by Gaetano Pietra \cite{Pie}.
The hunger for measures of inequality, especially in economics applications, is still alive, as witnessed by the evergrowing amount of works in this direction \cite{giudici2023safe,betti2008advances,coulter2019measuring,babaei2025rank,hao2010assessing}. 
We refer the reader to \cite{banerjee2020inequality,eliazar2018tour,eliazar2020gini} for an exhaustive review of this topic.
Similarly to the coefficient of variation, there is no universally accepted extension of the Gini index to the multivariate case and, for this reason, different higher-dimensional extensions have been proposed over time.
The earliest approach relies on differential geometry methods and was developed by Taguchi \cite{taguchi1972one, taguchi1972two}. 
Subsequent contributions were made by Arnold \cite{arnold2008pareto}, Arnold and Sarabia \cite{arnold2018analytic}, Gajdos and Weymark \cite{gajdos2005multidimensional}, Koshevoy and Mosler \cite{koshevoy1996lorenz,koshevoy1997multivariate}, and Sarabia and Jorda \cite{sarabia2020lorenz}. 
Unfortunately, as discussed in \cite{arnold2018analytic}, these multivariate extensions are largely guided by elegant mathematical frameworks but often lack practical applicability and interpretability.
A recent line of research defines an higher dimensional Gini index based on the principal components of the data, rather than on the data itself \cite{toscani2022fourier}.
This approach has lead to the definition of new measures of inequality that are scale invariant \cite{toscani2024} as well as to a natural way to express the multivariate Gini index as a suitable convex combination of the Gini indexes of its principal components \cite{auricchio2024extending}.
This has been then fruitfully used to define discrepancies which are stable under change of scale \cite{auricchio2024multivariate}.

\subsection*{Our Contribution and Structure of the Paper}

Building on the above references, and on a recent generalization of the Gini index for multivariate distributions \cite{auricchio2024extending,toscani2024}, in this paper we shed further light on the connections between the Gini index and the coefficient of variation, for the multidimensional case.
Through this connection, that is based on a solid theoretical foundation \cite{auricchio2024extending, auricchio2024multivariate}, we compare the existing multivariate coefficients of variation in terms of their properties.

The paper is organised as follows. 
After some preliminary background in Section \ref{sec:pre}, in Section \ref{sec:properties} we define the  properties that a multivariate coefficient of variation should have, in analogy of those of the univariate coefficient \eqref{eq:std_CV}. 
In Section \ref{sec:ourCV}, we present a Gini based multivariate coefficient of variation, and demonstrate its properties, along with those of the the Voinov-Nikulin's MCV, to which it is strictly related. 
In Section \ref{sec:other}, we compare the other existing MCVs in terms of their properties.  
Section \ref{sec:6} presents some examples and simulation studies which help clarifying the importance of our proposal, and concludes with some final remarks.

\section{Preliminaries}
\label{sec:pre}

In what follows, we denote with $|\bx|=\sqrt{x_1^2+x_2^2+\dots+x_n^2}$ the Euclidean norm of $\bx\in\R^n$.
Let $q \ge 1$, and let  $\PP_q(\R^n)$ denote the class of all probability measures $\mu$ on $\R^n$ with finite moment of order $q$, that is
\begin{equation}
    \label{eq:secondmoment}
    \int_{\R^n} |\bx|^q \mu(d\bx) < + \infty.
\end{equation}
Since we are working in Euclidean spaces, every probability measure $\mu$ is canonically associated with a random vector $\bX$.
Henceforth, we write $\bX\sim\mu$ to denote the random vector associated with the probability measure $\mu\in \PP_q(\R^n)$ and use $\bX$ and $\mu$ interchangeably.
In particular, given a random vector $\bX$ and a MCV $\gamma$, we denote the variability of $\bX$ according to $\gamma$ with either $\gamma(\bX)$ or $\gamma(\mu)$, depending on which expression is more convenient in the context.
For every $\mu\in\PP_2(\R^n)$, we have that the mean value vector  $\bm := \mathbb{E}(\bX)= \int_{\R^n} \bx \, \mu(d\bx)$ and the covariance matrix $\Sigma$, namely
\[
(\Sigma)_{ij} = cov(X_i,X_j) := \int_{\R^n} x_i \, x_j \, \mu(d\bx) - \left( \int_{\R^n} x_i \mu(d\bx) \right) \, \left( \int_{\R^n} x_j \mu(d\bx) \right),
\]
of a random vector ${\bf X}$, whose probability law is $\mu \in \PP_2(\R^n)$, are well defined. 
Similarly, the correlation matrix of $\mu$, namely $P$, is well-defined and we have that
\[
P_{i,j}=\frac{(\Sigma)_{i,j}}{\sqrt{Var(X_i)Var(X_j)}}.
\]
We remark that $\Sigma$ and $P$ are symmetric and positive semidefinite.
In what follows, we assume that $\Sigma$ is invertible and that $\bm\neq0$, unless we specify otherwise.

\subsection{Whitening processes}
\label{sec:WhiteProc}
Given a $n$-dimensional random vector $\bX$ whose mean is $\bm$ and covariance matrix is $\Sigma$, we have that the $n$-dimensional random vector $\bY=W\bX$  has mean $W\bm$ and $W^T\Sigma W$ for every $n\times n$ matrix $W$, \cite{mahalanobis2018generalized, li1998sphering}. 
We say that $W$ is a whitening matrix for a random vector $\bX$ if the covariance matrix of $W\bX$ is the identity matrix, that is $W^T\Sigma W=Id$ or, equivalently, if
\begin{equation}
\label{whi2}
W^TW =\Sigma^{-1}.
\end{equation}
Owing to the fact that the whitening matrix is not unique, there are a variety of whitening processes that are commonly used \cite{kessy2018optimal}. 
Indeed, it is easy to see that, given a whitening matrix $W$ for $\bX$ and an orthogonal matrix $Z$, the matrix $W' = ZW$ satisfies \eqref{whi2}.
However, as shown in \cite{auricchio2024extending},  only a few of these processes possess the \emph{scale stability} property, which ensures that the random vector obtained by whitening $\bX:=(X_1,X_2,\dots,X_n)$ or $\bX'=(X_1,X_2,\dots,aX_i,\dots,X_n)$ is the same for every $a>0$ and $i\in[n]$.
More formally, a whitening process is a map $\mathcal{S}$ that, given a random vector $\bX$ or its associated probability measure $\mu$, returns a whitening matrix $W_\mu$.
Denoted with $diag(\vec q)$ the diagonal matrix whose diagonal values are $\vec q =(q_1,q_2,\dots,q_n)$, we have that $\mathcal{S}$ is scale stable if, given a random vector $\bX$ and a diagonal matrix $Q=diag(q_1,q_2,\dots,q_n)$ such that $q_i > 0$, it holds
\[
    W_\mu\bX=W_{\mu^{Q}}(Q\bX),
\]
where $\mu^Q$ is the probability measure associated with the random vector $Q\bX$.
A \emph{scale stable} whitening processes exists; for example, in \cite{auricchio2024extending}, it has been shown that the Cholesky whitening and the \textit{Zero-phase Components Analysis} whitening transformation applied to the correlation matrix (ZCA-cor whitening) are both scale stable.
For the sake of simplicity, we focus only on the ZCA-cor whitening associated with a probability measure $\mu$, which is defined as
\begin{equation}
    \label{ZCA}
W_\mu^{ZCA} = P^{-1/2}V^{-1/2}=G^T\Theta GV^{-1/2},
\end{equation}
where 
\begin{enumerate*}[label=(\roman*)]
    \item $V$ is a diagonal matrix containing the variances of the components of $\bX$,
    \item $G$ is the orthonormal matrix induced by the eigenvectors of $P$, and
    \item $\Theta$ is the diagonal matrix containing the eigenvalues of $P$.
\end{enumerate*} 
Therefore, from now on, any whitening matrix is tacitly assumed to be the ZCA-cor whitening matrix associated to the random vector at hand, unless we specify otherwise.
Finally, given a random vector $\bX\sim\mu$, we denote with $W_{\mu}^{ZCA}\bX=:\bX^*\sim\mu^*$ its whitened counterpart and refer to the entries of $\bX^*$ as the \textit{principal components} of $\bX$.

\subsection{The Coefficient of Variation and its High-dimensional Generalizations}
\label{sec:otherCVs}
Given a one dimensional random variable with non null-mean, the standard way to measure its sparsity is through the coefficient of variation, defined as $CV(\mu):={\sigma}/{|m|}$, where $\sigma$ is the standard deviation of the random variable and $m\neq 0$ its mean.
There are different ways to extend the coefficient of variation to higher dimensional random vectors.
To the best of our knowledge, there are four extension of the coefficient of variation to the multivariate setting.
Given a probability measure $\mu\in \PP_2(\R^n)$, whose mean is $\bm$ and variance matrix is $\Sigma$, the known multivariate coefficients of variation are defined as follows
\begin{enumerate}
    \item Voinov-Nikulin's coefficient of variation \cite{voinov2012unbiased}, which is defined as
    \begin{equation}
        \label{eq:VN} \gamma_{VN}(\mu):=\sqrt{\frac{1}{\bm^T\Sigma^{-1}\bm}};
    \end{equation}
    \item Reyment's coefficient of variation \cite{reyment1960studies}, which is defined as
    \begin{equation}
        \label{eq:RCV} \gamma_{R}(\mu):=\sqrt{\frac{det(\Sigma)^{\frac{1}{n}}}{\bm^T\bm}};
    \end{equation}
    \item Van Valen's coefficient of variation \cite{van1974multivariate}, which is defined as
    \begin{equation}
        \label{eq:VVCV} \gamma_{VV}(\mu):=\sqrt{\frac{tr(\Sigma)}{\bm^T\bm}};
    \end{equation}
    \item Albert and Zhang's coefficient of variation \cite{albert2010novel}, which is defined as
    \begin{equation}
        \label{eq:AZCV} \gamma_{AZ}(\mu):=\sqrt{\frac{\bm^T\Sigma \bm}{\bm^T\bm}}.
    \end{equation}
\end{enumerate}

\section{Properties of Multivariate Coefficients of Variation}
\label{sec:properties}

In this section, we outline the properties that the classic univariate coefficient of variation 
\begin{equation}
\label{eq:CVDes}
    CV(\mu)=\frac{\sigma}{|m|}
\end{equation} satisfies and extend them to a higher dimensional setting, drawing inspiration from \cite{hurley2009comparing}.
%
%
We then employ these properties as guidelines to compare the four multivariate coefficients of variation introduced in the previous Section.
%


\subsection{Coherence}
The first property that a multivariate coefficient of variation should possess is that, when applied to a one dimensional measure, it coincides with the  univariate coefficient of variation in \eqref{eq:CVDes}.
We name this property coherence.

\begin{definition}
    A multivariate coefficient of variation $\gamma$ is coherent if
    \[
        \gamma(\mu)=\frac{\sigma}{|m|}
    \]
    for every $\mu\in \PP_2 (\erre)$ with mean $m\neq 0$ and variance $\sigma^2$.
\end{definition}

It is easy to show that all the four multivariate coefficients of variation introduced in Section \ref{sec:otherCVs} possess this property.

\subsection{Scale Invariancy}

One of the most important properties a multivariate coefficient of variation should possess is scale invariance.

\begin{definition}[Scale Invariance]
    Given a random $n$-dimensional vector $\bX\sim\mu$, with $\mu \in \PP_2(\R^n)$, and a $n\times n$ invertible matrix $A$, we denote with $\mu_A$ the probability measure associated with the random vector $\bY=A\bX$.
    A coefficient of variation $\gamma$ is scale invariant if $\gamma(\mu_A)=\gamma(\mu)$ for any invertible $n\times n$ matrix $A$.
\end{definition}

Scale invariance ensures that the sparsity of a measure depends only on the probability measure itself, and neither on the unit of measure nor on the coordinates we use to describe the space.
Notice that, if $\gamma$ is scale invariant, then the sparsity of any random vector $\bX$ is equal to the sparsity of its principal components $\bX^*$, that is $\gamma(\bX)=\gamma(\bX^*)$.

\subsection{Splitting Uncorrelated Features}

This property states that the sparsity of $n$ features that are not correlated can be expressed as a function of the sparsity of each entry of $\bX$.

\begin{definition}
    A multivariate coefficient of variation $\gamma$ is Splitting Uncorrelated Features (SUF) if, for every $n$, there exists a function $G:\erre^n\to[0,\infty)$ such that
    \begin{equation}
        \gamma(\bX)=G(\gamma(X_1),\dots,\gamma(X_n)),
    \end{equation}
    where $\bX=(X_1,\dots,X_n)$ is a random vector with uncorrelated entries, i.e. such that its covariance matrix $\Sigma$ is diagonal.
\end{definition}
Notice that a multivariate coefficient of variation satisfying this property enables the aggregation of sparsity information without the need of merging datasets.
Indeed, if we have two sets of uncorrelated information $\bX'$ and $\bX''$, we can compute $\gamma((\bX',\bX''))$ using only $\gamma(\bX')$ and $\gamma(\bX'')$.

\subsection{Rising Tide}

The rising tide property was firstly introduced in \cite{hurley2009comparing}.  
When applied to measures of inequality which are defined on positive quantities, the property states that if we add any positive constant to all elements of the vector, the value of the MCV decreases.
When considering measures of inequality, such as Gini index, the quantity to add is necessarily positive.
However, since coefficients of variation are defined for more general probability distributions  with non-null mean, we cannot add any constant to the random vector.
For example, if $\bX$ has mean $\bm\neq0$, the vector $\bX-\bm$ has null mean.
For this reason, we define below a  revised version of the rising tide property.

\begin{definition}
    \label{def:rising}
    Let $\gamma$ be a multivariate coefficient of variation.
    We say that $\gamma$ satisfies the rising tide property if, given $\bX\sim\mu$ an $n$-dimensional random vector with mean $\bm$ and covariance matrix $\Sigma$, we have that
    \[
        \gamma(\mu)\ge\gamma(\mu_{\bc})
    \]
    for every $\bc\in\erre^n$ such that $\bc^T\Sigma^{-1}\bm \ge 0$.
\end{definition}

\begin{remark}
    The condition $\bc^T\Sigma^{-1}\bm \ge0$ is necessary to ensure that the mean of the principal components of a random vector $\bX$ do not nullify.
    Given a whitening matrix $W$ associated to $\bX$, we have that the mean of $\bX^*=W\bX$ is $\bm^*=W\bm$.
    Therefore, when we add a constant value $\bc$ to $\bX$, we need to ensure that $\bm^*$ and the whitened vector $W\bc$ point in the same direction.
    This happens if and only if $(W\bc)^T W\bm \ge 0$ or equivalently, if $\bc^TW^TW\bm=\bc^T\Sigma^{-1}\bm\ge0$.
    Notice that, when $X$ is a random variable, the condition $\bc^T\Sigma^{-1}\bm \ge 0$ boils down to ${(cm)}/{\sigma^2}>0$, that is $cm>0$: $c\neq 0$ and $m$ must have the same sign.
\end{remark}

Consider the case in which $X\sim\mu$ is a random variable with positive mean.
Definition \ref{def:rising} then reads that if we add any positive constant to $X$ the coefficient of variation decreases.
Indeed, adding a constant to $X$ does not affect its variance matrix, hence we have that
\[
    CV(\mu_{c})=\frac{\sigma}{|m+c|}=\frac{\sigma}{m+c}<\frac{\sigma}{m}=\frac{\sigma}{|m|}=CV(\mu).
\]
Notice that this property does not hold if we allow $c$ to be negative.
This property describes the fact that two random vectors having the same covariance matrix but different means entail different levels of variability.

\subsection{Cloning}

This property states that the value of the multivariate coefficient of variation should not change if we consider a coupling $(\bX,\bX)$ composed of two independent copies of the vector $\bX$.
In other words, measuring the same quantities twice does not affect the sparsity of the measured quantities.

\begin{definition}
    A coefficient of variation $\gamma$ satisfies the cloning property if, for any given $\bX$, we have that
    \[
        \gamma(\bX)=\gamma\big((\bX,\bX)\big)
    \]
    where $(\bX,\bX)$ is an independent coupling.
\end{definition}

\subsection{Dimension Stability}

Lastly, we consider dimension stability.
This property describes the limiting behaviour of a multivariate coefficient of variation when evaluated on a sequence of independent random vectors.
It seems desirable that the multivariate coefficient of variation converges toward a constant value $L$ if the marginal measures $\mu_i$ are such that $CV(\mu_i)=\frac{\sigma_i}{|m_i|}$ exists, it is finite, and $\lim_{i\to\infty}\frac{\sigma_i}{|m_i|}=L$.
This property implies that if we have an increasing number of independent features whose coefficient of variation converge to $L$, the sparsity of the random vector that contains them converges toward the common value $L$ as the number of independent features increases.

\begin{definition}
\label{def:dimstable}
    Let $\gamma$ be a multivariate coefficient of variation and let $\{\mu^{(n)}\}_{n\in\mathbb{N}}$ be a sequence of independent probability measures such that
    \begin{itemize}
        \item for every $n\in\mathbb{N}$, we have that $\mu^{(n)}\in \PP_2(\erre^n)$,
        \item for every $n\in\mathbb{N}$, let $\bX^{(n)}=(X_1^{(n)},X_2^{(n)},\dots,X_n^{(n)})\sim\mu^{(n)}$, then each entry of $\bX^{(n)}$ is independent from the other entries, and
        \item the marginal of $\mu^{(n)}$ on the first $n-1$ coordinates is equal to $\mu^{(n-1)}$ or, equivalently $\bX^{(n-1)}=\bX_{-n}^{(n)}=(X_1^{(n)},X_2^{(n)},\dots,X_{n-1}^{(n)})$ for every $n$.
    \end{itemize}
    Denoted with $\bm^{(n)}$ the mean of $\mu^{(n)}$ and with $\sigma^{(n)}_{n,n}$ be the standard deviation of the $n$-th component of $\mu^{(n)}$, we say that $\gamma$ is \textit{dimension stable} if the limit of $\gamma(\mu^{(n)})$ exists whenever $\lim_{n\to\infty}\frac{\sigma_{n,n}^{(n)}}{|m^{(n)}_n|}$ exists and it holds
    \[
        \lim_{n\to\infty}\gamma(\mu^{(n)})=\lim_{n\to\infty}\frac{\sigma_{n,n}^{(n)}}{|m^{(n)}_n|}.
    \]
\end{definition}

In other words, a coefficient of variation is dimension stable if, when adding to a random vector a sequence of independent one dimensional random variables, whose coefficient of variation converges to a value $L$, the coefficient of variation of the resulting sequence of random vectors converge to $L$ as well.

\section{The Gini Index as a Coefficient of Variation}
\label{sec:ourCV}

Building on the connection established in the introduction, in this section we show that the squared multivariate Gini index extending \eqref{eq:G2_intro} yields a multivariate coefficient of variation with the properties discussed in the previous section.
Let $\mu \in \PP_2(\R^n)$ be a probability measure, let $\bm$ be its mean value and $\Sigma$ be its covariance matrix.
The squared Gini index of $\mu$ is then defined as follows
\begin{equation}
\label{eq:G_2_main}
 G_2(\mu):=\Bigg(\frac{1}{2\bm^T\Sigma^{-1}\bm}\int_{\erre^n}\int_{\erre^n}(\bx-\by)^T\Sigma^{-1}(\bx-\by)\mu(d\bx)\mu(d\by)\Bigg)^{\frac{1}{2}}.
\end{equation}
We then have the following.

\begin{theorem}
    \label{thm:mainequiv}
    Given any probability measure $\mu\in\PP_2(\erre^n)$, we have that
    \begin{equation}
        G_2(\mu)=\sqrt{n}\;\gamma_{VN}(\mu)=\sqrt{\frac{n}{\bm^T\Sigma^{-1}\bm}}.
    \end{equation}
\end{theorem}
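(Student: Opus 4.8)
The plan is to observe that both claimed equalities follow once we evaluate the double integral appearing in the definition \eqref{eq:G_2_main} of $G_2(\mu)$. Writing $A := \Sigma^{-1}$ (which exists by the standing assumption that $\Sigma$ is invertible), it suffices to show that
\[
    I := \int_{\R^n}\int_{\R^n}(\bx-\by)^T A(\bx-\by)\,\mu(d\bx)\mu(d\by) = 2n.
\]
Indeed, substituting $I = 2n$ into \eqref{eq:G_2_main} gives $G_2(\mu)^2 = 2n/(2\,\bm^T A\bm) = n/(\bm^T\Sigma^{-1}\bm)$, and taking square roots yields the expression $\sqrt{n/(\bm^T\Sigma^{-1}\bm)}$ and, after factoring out $\sqrt n$, the value $\sqrt n\,\gamma_{VN}(\mu)$ by the definition \eqref{eq:VN}.

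To compute $I$, first I would expand the quadratic form using the symmetry of $A$:
\[
    (\bx-\by)^T A(\bx-\by) = \bx^T A\bx + \by^T A\by - 2\,\bx^T A\by.
\]
Integrating term by term and using that both marginals of the product measure are $\mu$, the first two contributions coincide and each equals $\mathbb{E}[\bX^T A\bX]$, while the cross term factors through the means, $\int\int \bx^T A\by\,\mu(d\bx)\mu(d\by) = \bm^T A\bm$. Hence $I = 2\,\mathbb{E}[\bX^T A\bX] - 2\,\bm^T A\bm$.

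The key computation is then the expectation of the quadratic form. I would use the standard identity $\mathbb{E}[\bX^T A\bX] = \mathrm{tr}(A\Sigma) + \bm^T A\bm$, which follows from writing $\bX^T A\bX = \mathrm{tr}(A\,\bX\bX^T)$, passing the trace through the expectation, and using $\mathbb{E}[\bX\bX^T] = \Sigma + \bm\bm^T$. Substituting this into the expression for $I$, the two $\bm^T A\bm$ terms cancel, leaving $I = 2\,\mathrm{tr}(A\Sigma) = 2\,\mathrm{tr}(\Sigma^{-1}\Sigma) = 2\,\mathrm{tr}(\mathrm{Id}) = 2n$, as required.

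This argument is essentially a direct calculation, so I do not expect a serious obstacle; the only point requiring care is tracking the cancellation of the mean-dependent terms. It is precisely this cancellation that makes the final value $2n$ independent of $\bm$, which is the structural reason why the $L^2$ Gini index $G_2$ collapses to a fixed multiple of the Voinov--Nikulin coefficient.
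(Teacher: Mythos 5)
Your proof is correct and follows essentially the same route as the paper: both expand $(\bx-\by)^T\Sigma^{-1}(\bx-\by)$, use the symmetry of the product measure $\mu\otimes\mu$ to cancel the cross term against one copy of $\bm^T\Sigma^{-1}\bm$, and thereby reduce the double integral to the centered second moment $\mathbb{E}\big[(\bX-\bm)^T\Sigma^{-1}(\bX-\bm)\big]$. The only difference lies in how that moment is evaluated: the paper whitens via the change of variables $\by^*=\Sigma^{-\frac{1}{2}}(\bx-\bm)$ and sums the $n$ unit variances of the components of $\mu^*$, whereas you invoke the trace identity $\mathbb{E}[\bX^T A\bX]=\mathrm{tr}(A\Sigma)+\bm^T A\bm$ to obtain $\mathrm{tr}(\Sigma^{-1}\Sigma)=n$ directly --- the same computation in slightly different clothing.
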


\begin{proof}
    By definition, we have that
    \begin{align*}
         G_2^2(\mu)&=\frac{1}{2\bm^T\Sigma^{-1}\bm}\int_{\erre^n}\int_{\erre^n}(\bx-\by)^T\Sigma^{-1}(\bx-\by)\mu(d\bx)\mu(d\by)\\
         &=\frac{1}{2\bm^T\Sigma^{-1}\bm}\Big(\int_{\erre^n}\int_{\erre^n}(\bx^T\Sigma^{-1}\bx+\by^T\Sigma^{-1}\by-2\bx^T\Sigma^{-1}\by)\mu(d\bx)\mu(d\by)\Big)\\
         &=\frac{1}{\bm^T\Sigma^{-1}\bm}\Big(\int_{\erre^n}\int_{\erre^n}(\bx^T\Sigma^{-1}\bx-\bx^T\Sigma^{-1}\by)\mu(d\bx)\mu(d\by)\Big)\\
         &=\frac{1}{\bm^T\Sigma^{-1}\bm}\Big(\int_{\erre^n}(\bx^T\Sigma^{-1}\bx)\mu(d\bx)-\bm^T\Sigma^{-1}\bm\Big)\\
         &=\frac{1}{\bm^T\Sigma^{-1}\bm}\Big(\int_{\erre^n}(\bx-\bm)^T\Sigma^{-1}(\bx-\bm)\mu(d\bx)\Big)
    \end{align*}
    since $\Sigma^{-1}$ is symmetric.
    We then rewrite $G_2^2(\mu)$ as it follows
    \begin{equation}
        G_2^2(\mu)=\frac{1}{\bm^T\Sigma^{-1}\bm}\int_{\erre^n}(\Sigma^{-\frac{1}{2}}(\bx-\bm))^T(\Sigma^{-\frac{1}{2}}(\bx-\bm))\mu(d\bx)\mu(d\by);
    \end{equation}
    if we change variable and set $\by^*=\Sigma^{-\frac{1}{2}}(\bx-\bm)$, we have that
    \begin{equation}
        G_2^2(\mu)=\frac{1}{\bm^T\Sigma^{-1}\bm}\int_{\erre^n}||\by^*||_2^2\mu^*(d\by^*),
    \end{equation}
    where $\mu^*$ is a probability measure whose marginals have null mean and unitary variance.
    To conclude, we notice that
    \begin{align*}
        G_2^2(\mu)&=\frac{1}{\bm^T\Sigma^{-1}\bm}\int_{\erre^n}||\by^*||_2^2\mu^*(d\by^*)=\frac{1}{\bm^T\Sigma^{-1}\bm}\int_{\erre^n}\sum_{i=1}^n |y^*_i|^2\mu^*(d\by^*)\\
        &=\frac{1}{\bm^T\Sigma^{-1}\bm}\sum_{i=1}^n\int_{\erre^n} |y^*_i|^2\mu^*(d\by^*)=\frac{1}{\bm^T\Sigma^{-1}\bm}\sum_{i=1}^n1=\frac{n}{\bm^T\Sigma^{-1}\bm},
    \end{align*}
    since each marginal of $\mu^*$ has unitary variance and null mean.
\end{proof}

Theorem \ref{thm:mainequiv} has two important implications.
First of all, it builds on the identity presented in the Section \ref{sec:intro} between the squared Gini index and the coefficient of variation, extending it to the  multivariate case. 
In other words, the multidimensional squared Gini index is both a measure of variability from the mean and a measure of inequality. 
We will shortly see that $G_2$ possesses all properties defined in  Section \ref{sec:properties}.
Second, it represents the squared Gini index as  proportional to the Voinov-Nikulin's coefficient of variation, with a correction term equal to $\sqrt{n}$. 
%
%
This simple observation points out that, among the several options, the Voinov-Nikulim's coefficient of variation is the rightful multidimensional extension of the classic coefficient of variation, albeit corrected by a multiplicative constant.

\subsection{Properties of \texorpdfstring{$G_2$}{G2}}
In what follows, we show that the $G_2$ multivariate coefficient of variation possesses all the properties highlighted in Section \ref{sec:properties}.

\subsubsection{Coherence}
The coherence of $G_2$ follows from the fact that, for $n=1$, we have that $\Sigma^{-1}=\frac{1}{\sigma^2}$, thus
\[
G_2(\mu)=\sqrt{\frac{1}{\frac{m^2}{\sigma^2}}}=\frac{\sigma}{|m|}.
\]
Notice that, since for $n=1$ $G_2(\mu)=\gamma_{VN}(\mu)$ for every probability distribution $\mu\in \PP_2(\erre)$, also the Voinov-Nikulin's coefficient of variation is coherent.
\subsubsection{Scale Invariance}
Owing to Theorem \ref{thm:mainequiv} and the scale invariancy of the Voinov-Nikulin's coefficient of variation \cite{aerts2015multivariate}, we have that $G_2$ is scale invariant as well.

\subsubsection{Splitting Uncorrelated Features}
We now show that $G_2$ satisfies the SUF property by providing a function $G$ that allows us to express $G_2(\bX)$ as a function of the coefficients of variation of the entries of $X_i$ when each $X_i$ is uncorrelated from the other entries.
\begin{theorem}
    Let $\bX\sim\mu\in\PP_2(\erre^n)$ be a vector with uncorrelated entries. 
    Denoted with $\Sigma$ the diagonal matrix describing the covariance of $\bX$, we have that $G_2(\bX)=G(CV(X_1),\dots,CV(X_n))$, where
    \begin{equation}
    G(y_1,y_2,\dots,y_n)=\sqrt{\frac{n}{y_1^{-2}+\dots+y_n^{-2}}}.
\end{equation}
In particular, we have that $G_2(\bX)$ is the harmonic mean of the coefficients of variation of the entries of $\bX$. 
\end{theorem}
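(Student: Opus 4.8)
The plan is to deduce the statement directly from Theorem~\ref{thm:mainequiv}, which already collapses $G_2(\mu)$ to the single scalar $\bm^T\Sigma^{-1}\bm$. The only work that remains is to evaluate this quadratic form under the hypothesis that $\Sigma$ is diagonal and to recognise the outcome as $G$ applied to the univariate coefficients of variation.

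First I would invoke Theorem~\ref{thm:mainequiv} to write
\[
    G_2(\bX)=\sqrt{\frac{n}{\bm^T\Sigma^{-1}\bm}},
\]
so that everything hinges on computing $\bm^T\Sigma^{-1}\bm$. Since the entries of $\bX$ are uncorrelated, $\Sigma=\mathrm{diag}(\sigma_1^2,\dots,\sigma_n^2)$ with $\sigma_i^2=Var(X_i)$, whence $\Sigma^{-1}=\mathrm{diag}(\sigma_1^{-2},\dots,\sigma_n^{-2})$. Writing $\bm=(m_1,\dots,m_n)$ with $m_i=\mathbb{E}(X_i)$, the quadratic form collapses to a weighted sum of squares,
\[
    \bm^T\Sigma^{-1}\bm=\sum_{i=1}^n\frac{m_i^2}{\sigma_i^2}.
\]

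Next I would rewrite each summand through the one-dimensional coefficient of variation $CV(X_i)=\sigma_i/|m_i|$, using the elementary identity $m_i^2/\sigma_i^2=CV(X_i)^{-2}$, so that the denominator becomes $\sum_{i=1}^n CV(X_i)^{-2}$. Substituting back yields
\[
    G_2(\bX)=\sqrt{\frac{n}{\sum_{i=1}^n CV(X_i)^{-2}}}=G\big(CV(X_1),\dots,CV(X_n)\big),
\]
which is exactly the claimed identity with $y_i=CV(X_i)$. To justify the closing remark I would then observe that $G_2(\bX)^2=n/\sum_{i=1}^n CV(X_i)^{-2}$ is precisely the harmonic mean of the squared coefficients $CV(X_i)^2$, so that $G_2(\bX)$ is their harmonic-mean aggregation (equivalently, the power mean of order $-2$ of the $CV(X_i)$).

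There is no genuine obstacle here: the statement is essentially a one-line corollary of Theorem~\ref{thm:mainequiv} once the diagonal structure of $\Sigma$ is exploited. The only points that warrant care are the bookkeeping between $m_i^2/\sigma_i^2$ and $CV(X_i)^{-2}$ (well-defined provided each $m_i\neq 0$, so that every $CV(X_i)$ is finite, consistent with $G$ taking real arguments) and making explicit in what sense the resulting expression is the ``harmonic mean'' being referenced.
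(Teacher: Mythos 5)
Your proposal is correct and follows essentially the same route as the paper's proof, which likewise reduces the claim to Theorem~\ref{thm:mainequiv} and the identity $\bm^T\Sigma^{-1}\bm=\sum_{i=1}^n m_i^2/\sigma_i^2=\sum_{i=1}^n CV(X_i)^{-2}$ for diagonal $\Sigma$ (the paper simply verifies the chain of equalities starting from $G(CV(X_1),\dots,CV(X_n))$ rather than from $G_2(\bX)$). Your closing observation that $G_2(\bX)$ is, strictly speaking, the power mean of order $-2$ of the $CV(X_i)$ (equivalently, the square root of the harmonic mean of the $CV(X_i)^2$) is a fair and slightly more precise reading of the paper's ``harmonic mean'' remark.
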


\begin{proof}
  From a simple computation, we have that
\begin{align*}
    G(CV(X_1),CV(X_2),\dots,CV(X_n))&=\sqrt{\frac{n}{\frac{m_1^2}{\sigma_1^2}+\dots+\frac{m_n^2}{\sigma_n^2}}}=\sqrt{\frac{n}{\bm^T\Sigma^{-1}\bm}}=G_2(\bX),
\end{align*}
which concludes the proof.
\end{proof}

It is easy to adapt the argument to suit $\gamma_{VN}$.
In particular, we have that $\gamma_{VN}$ is an unnormalised harmonic mean of the coefficient of variation of the entries of $\bX$. 
Owing to the properties of the harmonic mean, we are able to infer the following corollary.

\begin{corollary}
\label{crr:prop_harm_mean}
    Let $\bX\sim\mu\in\PP_2(\erre^n)$ be a random vector. Denoted with $\bX^*$ its principal components, we have that
    \begin{enumerate}
        \item \label{enum1} $\min_{i=1,\dots,n}CV(X_i^*)\le G_2(\bX)\le \max_{i=1,\dots,n}CV(X_i^*)$. In particular,  if all the principal components of $\bX$ have the same one dimensional coefficient of variation, the multidimensional $G_2$ on the vector of the principal components  attains the same value.
        \item \label{enum2} If we replace $k$ principal components of $\bX^*$, which we denote with $\bK^*,$ with $k$ uncorrelated random variables whose coefficients of variation are all equal to the $G_2(\bK^*)$ we have that $G_2(\bX)=G_2(\bR^*,\bK^*)$, where $\bR^*$ is a random vector containing the $n-k$ principal components not contained in $\bK^*$.
    \end{enumerate}
\end{corollary}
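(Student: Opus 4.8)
The plan is to reduce everything to the harmonic-mean representation of $G_2$ on uncorrelated vectors established in the preceding theorem, and then to invoke only elementary properties of the harmonic mean. First I would pass from $\bX$ to its principal components $\bX^*$: since whitening is an invertible linear map and $G_2$ is scale invariant, $G_2(\bX)=G_2(\bX^*)$. By construction the whitened vector $\bX^*$ has the identity covariance matrix, so its entries are uncorrelated (with unit variance) and the SUF theorem applies to $\bX^*$, giving
\[
G_2(\bX)^2=G_2(\bX^*)^2=\frac{n}{\sum_{i=1}^n CV(X_i^*)^{-2}},
\]
i.e. $G_2(\bX)^2$ is the harmonic mean of the squares $CV(X_1^*)^2,\dots,CV(X_n^*)^2$. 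Both items are then statements about this harmonic mean.

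For item \ref{enum1}, I would use that the harmonic mean of positive numbers always lies between their minimum and maximum. Applying this to the family $\{CV(X_i^*)^2\}$ and then taking square roots (legitimate since all quantities are positive and $t\mapsto\sqrt t$ is increasing) yields
\[
\min_i CV(X_i^*)\le G_2(\bX)\le \max_i CV(X_i^*).
\]
The stated special case is the equality case: if every $CV(X_i^*)$ equals a common value $c$, then the harmonic mean of the constant sequence $\{c^2\}$ is $c^2$, whence $G_2(\bX)=c$.

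For item \ref{enum2}, I would first fix the reading of the claim: on the right-hand side $\bK^*$ denotes the \emph{substituted} block, namely the $k$ uncorrelated variables each having coefficient of variation $G_2(\bK^*)$, whereas on the left $\bK^*$ denotes the original principal components; the assertion is that this substitution leaves $G_2$ unchanged. The key observation is that $G_2$ is governed entirely by the sum of reciprocal squares $\sum_i CV(X_i^*)^{-2}$, and this sum is preserved by the substitution. Writing $\bX^*=(\bR^*,\bK^*)$ and applying the SUF theorem to the sub-vector $\bK^*$ (which inherits a diagonal covariance from $\bX^*$, so the theorem applies) gives $\sum_{i\in K}CV(K_i^*)^{-2}=k\,G_2(\bK^*)^{-2}$. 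Replacing $\bK^*$ by $k$ uncorrelated variables each with coefficient of variation $G_2(\bK^*)$ keeps the $K$-block contribution equal to $k\,G_2(\bK^*)^{-2}$; since the new variables are also uncorrelated with $\bR^*$, the full vector still has diagonal covariance and its total sum of reciprocal squares is unchanged. A final application of the SUF representation then gives the claimed equality.

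The only point requiring genuine care — and the natural place for a sloppy argument to go wrong — is the repeated use of the SUF theorem: at each stage (for $\bX^*$, for the sub-vector $\bK^*$, and for the substituted vector) one must verify that the relevant covariance matrix is diagonal so that the harmonic-mean formula is valid. These checks follow respectively from whitening producing the identity covariance, from a diagonal block of a diagonal matrix being diagonal, and from the uncorrelatedness hypothesis on the new variables. Everything else is a one-line property of the harmonic mean.
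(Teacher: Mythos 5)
Your proposal is correct and follows essentially the same route as the paper, which simply invokes the internality (min--max bound) and associativity of the harmonic mean after the SUF theorem; you merely make explicit the steps the paper leaves implicit, namely the reduction $G_2(\bX)=G_2(\bX^*)$ via scale invariance and the verification that each vector to which the SUF representation is applied has diagonal covariance. Your careful disambiguation of the double use of $\bK^*$ in item \ref{enum2} is a genuine improvement in exposition, but not a different argument.
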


\begin{proof}
    Statement \ref{enum1} follows from the additivity of the harmonic mean. Statement \ref{enum2} follows from the associativity of the harmonic mean.
\end{proof}

Notice that Corollary \ref{crr:prop_harm_mean} does not hold for the classic Voinov-Nikulin's coefficient of variation.

\subsubsection{Rising Tide Property}

We now consider the rising tide property.
Notice that the same argument we use to prove $G_2$ possesses this property can be adapted to show that $\gamma_{VN}$ possesses it as well.
First of all, we notice that if $\Sigma$ is the covariance matrix of a random vector $\bX$, then the vector $\bX+\bc$ has the same covariance matrix for any $\bc\in\erre^n$.

\begin{theorem}
    The $G_2$ multivariate coefficient of variation satisfies the rising tide property.
\end{theorem}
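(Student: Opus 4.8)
The plan is to reduce the statement to a single scalar inequality between two quadratic forms, using the closed-form expression for $G_2$ supplied by Theorem \ref{thm:mainequiv}. First I would invoke the observation, noted immediately before the statement, that adding a constant vector $\bc$ leaves the covariance matrix unchanged: the random vector associated with $\mu_{\bc}$ still has covariance $\Sigma$, while its mean becomes $\bm+\bc$. Combining this with the identity $G_2(\mu)=\sqrt{n/(\bm^T\Sigma^{-1}\bm)}$ from Theorem \ref{thm:mainequiv} gives
\[
G_2(\mu)=\sqrt{\frac{n}{\bm^T\Sigma^{-1}\bm}}, \qquad G_2(\mu_{\bc})=\sqrt{\frac{n}{(\bm+\bc)^T\Sigma^{-1}(\bm+\bc)}}.
\]

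Next, since $\Sigma$ is invertible and positive semidefinite, $\Sigma^{-1}$ is positive definite, so both quadratic forms appearing in the denominators are strictly positive and the square root is monotone. Hence the desired inequality $G_2(\mu)\ge G_2(\mu_{\bc})$ is equivalent to the reversed inequality between the denominators, namely $(\bm+\bc)^T\Sigma^{-1}(\bm+\bc)\ge \bm^T\Sigma^{-1}\bm$. Expanding the left-hand side and using that $\Sigma^{-1}$ is symmetric (so the two cross terms coincide) reduces the claim to
\[
2\,\bc^T\Sigma^{-1}\bm+\bc^T\Sigma^{-1}\bc\ge 0.
\]

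The final step is to observe that both summands are nonnegative: the first is exactly twice the quantity $\bc^T\Sigma^{-1}\bm$, which is assumed nonnegative by the hypothesis of Definition \ref{def:rising}, and the second is nonnegative because $\Sigma^{-1}$ is positive definite. This closes the argument. There is no serious obstacle here; the only points requiring care are the direction of the inequality after the square-root reduction (a larger denominator yields a smaller $G_2$), and the recognition that the hypothesis $\bc^T\Sigma^{-1}\bm\ge 0$ is tailored precisely to control the single cross term that survives the expansion. The identical computation, dropping the factor $\sqrt{n}$, simultaneously shows that $\gamma_{VN}$ satisfies the rising tide property, as claimed in the text preceding the statement.
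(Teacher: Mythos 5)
Your proposal is correct and follows essentially the same route as the paper: both apply the identity $G_2(\mu)=\sqrt{n/(\bm^T\Sigma^{-1}\bm)}$ from Theorem \ref{thm:mainequiv}, use the invariance of $\Sigma$ under the shift $\bX\mapsto\bX+\bc$, and expand $(\bm+\bc)^T\Sigma^{-1}(\bm+\bc)=\bm^T\Sigma^{-1}\bm+2\bc^T\Sigma^{-1}\bm+\bc^T\Sigma^{-1}\bc$, with the hypothesis $\bc^T\Sigma^{-1}\bm\ge 0$ and positive definiteness of $\Sigma^{-1}$ yielding the inequality. Your explicit remarks on the monotonicity of the square root and the adaptation to $\gamma_{VN}$ are implicit in the paper but add nothing new.
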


\begin{proof}
    Let $\bX$ be a random vector with covariance matrix $\Sigma$ and mean $\bm$.
    Given $\bc$ such that $\bc^T\Sigma^{-1}\bm\ge 0$, we have that
    \begin{align*}
        G_2(\bX+\bc)=\sqrt{\frac{n}{(\bm+\bc)^T\Sigma^{-1}(\bm+\bc)}}&=\sqrt{\frac{n}{\bm^T\Sigma^{-1}\bm+2\bc^T\Sigma^{-1}\bm+\bc^T\Sigma^{-1}\bc}}\\
        &\le\sqrt{\frac{n}{\bm^T\Sigma^{-1}\bm}}=G_2(\bX),
    \end{align*}
    which concludes the proof.    
\end{proof}

\subsubsection{Cloning}
The $G_2$ coefficient of variation possesses the cloning property. 
Indeed, given a random vector $\bX$ with mean $\bm$ and covariance matrix $\Sigma$, we have that the $2n$ dimensional random vector $(\bX,\bX)$ has mean $(\bm,\bm)$ and covariance matrix
\begin{equation}
\label{eq:double_cov}
    \Sigma_{(\bX,\bX)}=\begin{bmatrix}
        \Sigma, &0\\
        0, &\Sigma
    \end{bmatrix},
\end{equation}
where $0$ denotes the $n\times n$ null matrix. We then have that
\begin{equation}
    \Sigma_{(\bX,\bX)}^{-1}=\begin{bmatrix}
        \Sigma^{-1}, &0\\
        0, &\Sigma^{-1}
    \end{bmatrix},
\end{equation}
thus $(\bm,\bm)^T\Sigma_{(\bX,\bX)}^{-1}(\bm,\bm)=2\bm^T\Sigma^{-1}\bm$.
In particular we infer that
\[
G_2\big((\bX,\bX)\big)=\sqrt{\frac{2n}{2\bm^T\Sigma^{-1}\bm}}=\sqrt{\frac{n}{\bm^T\Sigma^{-1}\bm}}=G_2(\bX).
\]
Notice that, while $G_2$ possess the cloning property, $\gamma_{VN}$ does not due to the lack of the corrective term $\sqrt{n}$.

\subsubsection{Dimension Stability}
Lastly, we show that $G_2$ is dimension stable.
Indeed, let us consider a sequence of probability measures $\{\mu^{(n)}\}_{n\in\enne}$ as in Definition \ref{def:dimstable}.
Denoted with $\bX^{(n)}$ the random vector associated with $\mu^{(n)}=(m^{(n)}_1,\dots,m^{(n)}_n)$, with $\bm^{(n)}$ the mean of $\mu^{(n)}$, and with $\sigma^{(n)}_{n,n}$ the standard deviation of $X_n^{(n)}$, we have that
\begin{align}
\label{eq:hdns}
    G_2(\mu^{(n)})=\sqrt{n}\;\gamma_{VN}(\mu^{(n)})=\sqrt{\frac{n}{\sum_{i=1}^n\frac{(m^{(i)}_i)^2}{(\sigma^{(i)}_{i,i})^2}}}=\sqrt{\frac{1}{\frac{1}{n}\sum_{i=1}^n\frac{(m^{(i)}_i)^2}{(\sigma^{(i)}_{i,i})^2}}}.
\end{align}
The denominator on the right-hand side of equation \eqref{eq:hdns}  is a Cesaro's sum, which converges to $\lim_{i\to\infty}\frac{(m^{(i)}_i)^2}{(\sigma^{(i)}_{i,i})^2}$ as long as the latter limit exists.
Again, notice that, while $G_2$ is dimension stable, $\gamma_{VN}$ is not, as it lacks the corrective term $\sqrt{n}$ that makes the coefficient converge to a limit different from zero, when $n$ goes to $\infty$ in \eqref{eq:hdns}.

\begin{remark}[Influence Function of $G_2$]
    To conclude, we discuss the influence function of $G_2$, which measures its robustness when we add a perturbation concentrated in a singular point.
    Formally, the influence function is defined as
    \[
        IF_{G_2}(x;\mu):=\lim_{\epsilon\to0}\frac{G_2(\mu_\epsilon)-G_2(\mu)}{|\epsilon|}=\frac{\partial G_2(\mu_\epsilon)}{\partial  \epsilon}\Big|_{\epsilon=0},
    \]
    where $\mu_\epsilon=(1-\epsilon)\mu+\epsilon\delta_x$ and $\delta_x$ is the Dirac's delta centred in $x$.
    Notice that, since $G_2$ is equal to $\sqrt{n}\gamma_{VN}$ its influence function is $\sqrt{n}$ times the influence function of $\gamma_{VN}$, thus
    \[
        IF_{G_2}(x;\mu):=\frac{\sqrt{n}}{2\big(\bm^T\Sigma^{-1}\bm\big)^{\frac{3}{2}}}\Big((\bm^T\Sigma^{-1}\bm)^2-2\bm^T\Sigma^{-1}(\bx-\bm)\Big)       
    \]
    as shown in \cite{aerts2015multivariate}.
    Since all influence functions of the other MCV have been computed in \cite{aerts2015multivariate},  we will omit their study.   
\end{remark}


%

\subsection{A \texorpdfstring{$G_q$}{Gp} Multivariate Coefficient of Variation}
We can generalise the squared multidimensional  coefficient in \eqref{eq:G_2_main} considering the $L_q$ norm rather than the $L_2$ norm in \eqref{eq:G_2_main}. 
For any given $q\ge 1$ and $\mu \in \PP_q(\R^n)$, we define
\begin{equation}
\label{def:Ginipdef}
    G_q(\bX)=\Bigg(\frac{1}{2||W_\mu^{ZCA}\bm||^q_q}\int_{\erre^n}\int_{\erre^n}||W^{ZCA}_{\mu}(\bx-\by)||^q_q\mu(d\bx)\mu(d\by)\Bigg)^{\frac{1}{q}},
\end{equation}
where $W_\mu^{ZCA}$ is the ZCA-cor whitening matrix associated with $\bX\sim\mu$.  
Notice that, when $q=2$, we obtain $G_2$ since $||W_\mu^{ZCA}\bm||_2^2=(W_\mu^{ZCA}\bm)^TW_\mu^{ZCA}\bm=\bm^T\Sigma^{-1}\bm$.
First, we notice that $G_2$ is the only MCV that is coherent.
Indeed, given $q\neq 2$ and $X\sim\mu\in \PP_q(\erre)$, we have that $W_\mu^{ZCA}=\frac{1}{\sigma}$ thus
\begin{equation}
    G_q(X)=\frac{\sigma}{|m|}\Bigg(\frac{1}{2\sigma^q}\int_{\erre^n}\int_{\erre^n}|x-y|^q\mu(dx)\mu(dy)\Bigg)^{\frac{1}{q}}=\Bigg(\frac{1}{2|m|^q}\int_{\erre^n}\int_{\erre^n}|x-y|^q\mu(dx)\mu(dy)\Bigg)^{\frac{1}{q}}.
\end{equation}
In particular, we have that $G_q(X)=CV(X)$ if and only if
\[
    \int_{\erre^n}\int_{\erre^n}|x-y|^q\mu(d\bx)\mu(d\by)=2\sigma^q,
\]
which is not true in general.
By the same argument, we have that $G_q$ is not dimension stable.
On univariate random variables, $G_q$ does not depend on $\sigma$, whereas it is easy to see that for a multidimensional random vector $\bX$ with uncorrelated entries $G_q(\bX)$ do depend on the variances of each entry $\sigma_i$.
We then conclude that $G_q$ is not SUF whenever $q\neq 2$.
Moreover, following the argument used in \cite{auricchio2024extending}, it is possible to show that each $G_q$ for $q\neq 2$ is not scale invariant.
However, owing to the fact that the ZCA-cor whitening process is scale stable, $G_q$ is invariant under change of unit of measurement, i.e. $G_q(\bX)=G_q(Q\bX)$ for every diagonal matrix $Q$ whose diagonal values are positive.
Finally, we notice that every $G_q$ satisfies the cloning property.
To conclude we study the limit of $G_q$ for $q\to\infty$.

\begin{theorem}
    Given $\bX\sim\mu\in \PP_q(\erre^n)$ with bounded support, we have that
    \[
\lim_{q\to\infty}G_q(\bX)=\frac{\max_{i=1,\dots,n}range(X_i^*)}{||W_\mu^{ZCA}\bm||_{\infty}},
    \]
    where $range(X_i^*)$ is the diameter of the support of the $i$-th principal component and $||W_\mu^{ZCA}\bm||_{\infty}=\max_{i=1,\dots,n}|(W_\mu^{ZCA}\bm)_i|$ is the $l_\infty$-norm of the whitened mean vector $\bm^*$.
\end{theorem}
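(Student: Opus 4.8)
The plan is to pass to the principal components via the whitening change of variables and then reduce the claim to a one-dimensional $L^q$-to-$L^\infty$ limit handled coordinate by coordinate. Writing $W:=W_\mu^{ZCA}$ and $\bm^*:=W\bm$, I would first substitute $\bx^*=W\bx$, $\by^*=W\by$, so that $W(\bx-\by)=\bx^*-\by^*$ and, since $\|\cdot\|_q^q$ decomposes across coordinates,
\[
G_q(\bX)^q=\frac{\sum_{i=1}^n\int_{\erre^n}\int_{\erre^n}|x_i^*-y_i^*|^q\,\mu^*(d\bx^*)\mu^*(d\by^*)}{2\sum_{i=1}^n|m_i^*|^q}=\frac{\sum_{i=1}^n A_i(q)}{2\sum_{i=1}^n|m_i^*|^q},
\]
where $A_i(q):=\mathbb{E}\,|X_i^*-\tilde X_i^*|^q$ and $\tilde X_i^*$ is an independent copy of the $i$-th principal component. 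Taking $q$-th roots, the proof reduces to computing the limits of $\big(\sum_i A_i(q)\big)^{1/q}$, of $\big(\sum_i|m_i^*|^q\big)^{1/q}$, and of the harmless factor $2^{1/q}\to1$.

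For the denominator I would invoke the classical fact that, for a fixed finite vector, the $\ell^q$ norm converges to the $\ell^\infty$ norm, giving $\big(\sum_i|m_i^*|^q\big)^{1/q}\to\max_i|m_i^*|=\|W\bm\|_\infty$. The single-coordinate building block for the numerator is the limit
\[
A_i(q)^{1/q}=\big(\mathbb{E}\,|X_i^*-\tilde X_i^*|^q\big)^{1/q}\xrightarrow[q\to\infty]{}\operatorname{ess\,sup}|X_i^*-\tilde X_i^*|,
\]
and here I would argue that this essential supremum equals $range(X_i^*)$: since $X_i^*$ has bounded support, writing $[a_i,b_i]$ for the smallest closed interval carrying its law, we have $|X_i^*-\tilde X_i^*|\le b_i-a_i=range(X_i^*)$ almost surely, while every neighbourhood of $a_i$ and of $b_i$ carries positive mass, so the product law puts positive mass arbitrarily close to $(a_i,b_i)$ and the bound is attained up to $\varepsilon$. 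This yields $A_i(q)^{1/q}\to range(X_i^*)=:r_i$.

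The one genuine subtlety is that in $\big(\sum_i A_i(q)\big)^{1/q}=\big(\sum_i (A_i(q)^{1/q})^q\big)^{1/q}$ the bases $A_i(q)^{1/q}$ themselves depend on $q$, so the standard $\ell^q\to\ell^\infty$ statement does not apply verbatim. I would instead use a squeeze: for an index $i^\star$ attaining $\max_i r_i$, monotonicity gives $\big(\sum_i A_i(q)\big)^{1/q}\ge A_{i^\star}(q)^{1/q}\to\max_i r_i$, while $\big(\sum_i A_i(q)\big)^{1/q}\le n^{1/q}\max_i A_i(q)^{1/q}$, and since $n^{1/q}\to1$ and the maximum of finitely many convergent sequences converges to the maximum of the limits, the upper bound also tends to $\max_i r_i=\max_i range(X_i^*)$. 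Combining the three limits yields
\[
\lim_{q\to\infty}G_q(\bX)=\frac{\max_{i=1,\dots,n}range(X_i^*)}{\|W_\mu^{ZCA}\bm\|_\infty},
\]
as claimed. The main obstacle is precisely this interchange of the $q$-dependent bases with the $q\to\infty$ limit in the numerator; everything else is the coordinatewise $L^q$-to-$L^\infty$ convergence made uniform over the finitely many principal components.
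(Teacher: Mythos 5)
Your proof is correct and follows essentially the same route as the paper's: after the whitening change of variables, both arguments decompose the numerator coordinatewise, use the $L^q\to L^\infty$ convergence (for the fixed vector $\bm^*$ in the denominator and for each $|X_i^*-\tilde X_i^*|$ in the numerator), and resolve the $q$-dependent bases with the identical squeeze $\max_i A_i(q)^{1/q}\le\big(\sum_{i=1}^n A_i(q)\big)^{1/q}\le n^{1/q}\max_i A_i(q)^{1/q}$ together with $n^{1/q}\to 1$ and continuity of the maximum. The only (minor, beneficial) difference is that you explicitly justify $\operatorname{ess\,sup}|X_i^*-\tilde X_i^*|=range(X_i^*)$ via the positive-mass-near-endpoints argument for the product law, a step the paper simply asserts by writing $\max_{x,y}|(W_\mu^{ZCA}(\bx-\by))_i|=range(X_i^*)$.
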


\begin{proof}
    Let $\bX\sim\mu\in \PP_q(\erre^n)$ be a random vector with bounded support.
    By definition, we have that
    \begin{align*}
        G_q(\bX)=\frac{1}{2^{\frac{1}{q}}||W_\mu^{ZCA}\bm||_q}\Bigg( \int_{\erre^{n}}\int_{\erre^n}\sum_{i=1}^n|(W_\mu^{ZCA}(\bx-\by))_i|^q\mu(d\bx)\mu(d\by) \Bigg)^{\frac{1}{q}}.
    \end{align*}
    First, we notice that $\lim_{q\to\infty}2^{\frac{1}{q}}||W_\mu^{ZCA}\bm||_q=||W_\mu^{ZCA}\bm||_{\infty}$, since $2^{\frac{1}{q}}\to 1$ and, it is well-known that the $l_\infty$-norm is the limit of the $l_q$-norm for $q\to\infty$.
    To conclude the proof, we then need to show that
    \[
        \lim_{q\to\infty}\Bigg(\int_{\erre^n}\int_{\erre^n}\sum_{i=1}^n|W_\mu^{ZCA}((\bx-\by))_i|^q\mu(d\bx)\mu(d\by)\Bigg)^{\frac{1}{q}}=\max_{i=1,\dots,n}range(X_i^*),
    \]
    where $X_i^*$ is the $i$-th principal component of $\bX$.
    To simplify the notation, from now on, we adopt the following notation
    \[
    ||(W_\mu^{ZCA}(\bx-\by))_i||^q_q=\int_{\erre^n}\int_{\erre^n}|(W_\mu^{ZCA}(\bx-\by))_i|^q\mu(d\bx)\mu(d\by),
    \]
    where $i=1,\dots,n$.
    We then have
    \begin{align*}
        \Big(\sum_{i=1}^n||(W_\mu^{ZCA}(\bx-\by))_i||^q_q&\Big)^{\frac{1}{q}}-\max_{i=1,\dots,n}range(X_i^*)\\
        &=\Big(\sum_{i=1}^n||(W_\mu^{ZCA}(\bx-\by))_i||^q_q\Big)^{\frac{1}{q}}- \max_{i=1,\dots,n} ||(W_\mu^{ZCA}(\bx-\by))_i||_q\\
        &\quad+\max_{i=1,\dots,n} \max_{i=1,\dots,n} ||(W_\mu^{ZCA}(\bx-\by))_i||_q  -\max_{i=1,\dots,n}range(X_i^*).
    \end{align*}
    Owing to the fact that the $L_q$-norm of a function with respect to the measure $\mu\otimes\mu$ converges to the $L_\infty$-norm of the function with respect to $\mu\otimes\mu$, we have that
    \[
    \lim_{q\to\infty}\Bigg(\int_{\erre^n}\int_{\erre^n}|W_\mu^{ZCA}((\bx-\by))_i|^q\mu(d\bx)\mu(d\by)\Bigg)^{\frac{1}{q}}=\max_{x,y}|W_\mu^{ZCA}((\bx-\by))_i|=range(X_i^*).
    \]
    Since the $\max$ operator is continuous, we infer that
    \[
    \lim_{q\to\infty}\max_{i=1,\dots,n} \Bigg(\int_{\erre^n}\int_{\erre^n}|W_\mu^{ZCA}((\bx-\by))_i|^q\mu(d\bx)\mu(d\by)\Bigg)^{\frac{1}{q}}  -\max_{i=1,\dots,n}range(X_i^*)=0.
    \]
    We then need to show that
    \begin{align}
        \lim_{q\to\infty}&\Big(\sum_{i=1}^n||(W_\mu^{ZCA}(\bx-\by))_i||^q_q\Big)^{\frac{1}{q}}=\lim_{q\to\infty}\max_{i=1,\dots,n} ||(W_\mu^{ZCA}(\bx-\by))_i||_q.
    \end{align}
    Notice that
    \[
        \max_{i=1,\dots,n}||(W_\mu^{ZCA}(\bx-\by))_i||_q\le \Big(\sum_{i=1}^n||(W_\mu^{ZCA}(\bx-\by))_i||^q_q\Big)^{\frac{1}{q}}\le n^{\frac{1}{q}}\max_{i=1,\dots,n}||(W_\mu^{ZCA}(\bx-\by))_i||_q,
    \]
    which allows us to conclude the proof since $n$ is a fixed parameter.
\end{proof}

\begin{remark}
Note that $G_\infty$ defines a natural generalisation of the normalised univariate range to a multivariate setting.

\end{remark}

\section{Other Multivariate Coefficients of Variation}
\label{sec:other}

In this Section, we study the properties of the other MCVs introduced in Section \ref{sec:otherCVs}.
We do not consider the Voinov-Nikulin's coefficient of variation $\gamma_{VN}$, as it has been already discussed in the previous section along with $G_2$.
As shown in \cite{aerts2015multivariate}, only $\gamma_{VN}$ is scale invariant, so we omit a further analysis of this property.

\subsection{The Reyment's Coefficient of Variation} 
We start from the Reyment's coefficient of variation, that is
$$\gamma_{R}(\mu)=\sqrt{\frac{det(\Sigma)^{\frac{1}{n}}}{\bm^T\bm}}.$$
Note  that this coefficient is coherent, since $\det(\Sigma)=\sigma^2$ when $n=1$.
Despite being coherent, $\gamma_R$ does not possess any other of the properties outlined in Section \ref{sec:properties}.
First we show that $\gamma_R$ does not possess the cloning property.
Indeed, let $\bX$ be a random vector with mean $\bm$ and covariance $\Sigma$, we have that an independent coupling $(\bX,\bX)$ has mean $(\bm,\bm)$ and covariance matrix as in \eqref{eq:double_cov}.
We then have that $det(\Sigma_{(\bX,\bX)})=det(\Sigma)^2$ and $(\bm,\bm)^T(\bm,\bm)=2\bm^T\bm$, thus
\[
\gamma_R((\bX,\bX))=\sqrt{\frac{det(\Sigma_{(\bX,\bX)})^{\frac{1}{2n}}}{\bm^T\bm}}=\sqrt{\frac{det(\Sigma)^{\frac{1}{n}}}{2\bm^T\bm}}<\sqrt{\frac{det(\Sigma)^{\frac{1}{n}}}{\bm^T\bm}}=\gamma_R(\bX).
\]
Therefore, we have that $\gamma_R$ does not satisfy the cloning property.
We then show that $\gamma_R$ does not satisfy the rising tide property.
Let us consider a random vector $\bX=(X_1,X_2)$ whose mean is $\bm=(3,3)$ and covariance matrix 
\begin{equation}
\label{eq:sigmarising}
    \Sigma=\begin{bmatrix}
    1, &1\\
    1, &2
\end{bmatrix}.
\end{equation}
Let us now consider the vector $\bc=(1,-2)$ and $\bY=\bX+\bc$.
A simple computation shows that
\begin{equation*}
    \Sigma^{-1}=\begin{bmatrix}
    2, &-1\\
    -1, &1
\end{bmatrix},
\end{equation*}
hence $\bc^T\Sigma^{-1}\bm=3>0$.
To conclude, we need to show that $\gamma_R(\bY)>\gamma_R(\bX)$.
The mean of $\bY$ is $(4,1)$ while the covariance matrix of $\bY$ remains $\Sigma$ defined as in \eqref{eq:sigmarising}.
We have that $\gamma_R(\bX)=\sqrt{\frac{1}{18}}<\sqrt{\frac{1}{17}}=\gamma_R(\bY)$, thus $\gamma_R$ does not satisfy the rising tide property.
Moreover, we have that $\gamma_R$ does not possess the Splitting Uncorrelated Features property.
Indeed, let us assume that $\gamma_R(\bX)=G(CV(X_1),\dots,CV(X_n))$, where $G$ is a suitable function and $\bX=(X_1,\dots,X_n)$ is a vector whose components are uncorrelated.
For the sake of simplicity, let us assume that $n=2$, so that $\bX=(X_1,X_2)$, $\bm=(1,1)$, and $\Sigma=Id_2$, where $Id_2$ is the $2\times 2$ identity matrix.
We then have that $CV(X_1)=CV(X_2)=1$ thus
\[
    G(1,1)=\gamma_R(\bX)=\frac{1}{\sqrt{2}}.
\]
Let us now consider $\bY=(Y_1,Y_2)=(2 X_1,X_2)$.
The mean of $\bY$ is $(2,1)$ and has covariance matrix
\[
\Sigma=\begin{bmatrix}
    4, &0\\
    0, &1
\end{bmatrix}.
\]
However, we have that $CV(Y_1)=CV(Y_2)=1$, thus we have that
\[
    \frac{1}{\sqrt{2}}=\gamma_R(\bX)=G(1,1)=\gamma_R(\bY)=\sqrt{\frac{2}{4+1}}=\sqrt{\frac{2}{5}},
\]
which is a contradiction.
Finally, we show that $\gamma_R$ is not dimension stable.
Consider a sequence of $n$ dimensional random vectors $\bX^{(n)}\sim\mu^{(n)}$ as in Definition \ref{def:dimstable}.
Furthermore assume that every marginal of the random vector has mean $m_i=m\neq0$, so that $\bm^{(n)}=\bm=(m,\dots,m)\in\erre^n$ and variance $(\sigma^{(n)}_i)^2=\sigma^2$, so that $\Sigma=\sigma^2 Id_n$.
It is then easy to see that $det(\Sigma_n)=\sigma^{2n}$ and that $\bm^T\bm=nm^2$, moreover $\lim_{n\to\infty}CV(\mu^{(n)}_n)=\frac{\sigma}{|m|}$.
However, we have that
\[
\lim_{n\to\infty}\gamma_R(\mu^{(n)})=\lim_{n\to\infty}\sqrt{\frac{\sigma^2}{nm^2}}=0,
\]
proving that $\gamma_R$ is not dimension stable.

\subsection{The Van Valen's Coefficient of Variation}

We consider the Van Valen's coefficient of variation, that is
$$\gamma_{VV}(\mu)=\sqrt{\frac{tr(\Sigma)}{\bm^T\bm}}.$$
Note that this coefficient is coherent since $tr(\Sigma)=\sigma^2$ when $n=1$.
Moreover, $\gamma_{VV}$ possesses the cloning property.
Indeed, given a random vector $\bX$ with mean $\bm$ and covariance matrix $\Sigma$, we have that the coupling $(\bX,\bX)$ has mean $(\bm,\bm)$ and covariance matrix as in \eqref{eq:double_cov}.
We then have that $(\bm,\bm)^T(\bm,\bm)=2\bm^T\bm$ and $tr(\Sigma_{(\bX,\bX)})=2tr(\Sigma)$, thus
\[
    \gamma_{VV}\big((\bX,\bX)\big)=\sqrt{\frac{tr(\Sigma_{(\bX,\bX)})}{(\bm,\bm)^T(\bm,\bm)}}=\sqrt{\frac{2tr(\Sigma)}{2\bm^T\bm}}=\gamma_{VV}(\bX).
\]
The same example used for $\gamma_R$ can be employed to show that also $\gamma_{VV}$ does not possess the rising tide property.
%
%
Moreover, $\gamma_{VV}$ is not SUF.
Toward a contradiction, let $\bX=(X_1,X_2)$ be a random vector with mean $(2,1)$ and covariance matrix $\Sigma=Id_2$.
We then have a contradiction since 
\[
\sqrt{\frac{2}{5}}=\gamma_{VV}(\bX)=G(2,1)=\gamma\big((2X_1,X_2)\big)=\sqrt{\frac{5}{17}},
\]
since the mean of $(2X_1,X_2)$ is $(4,1)$ and its covariance matrix $\begin{bmatrix}
    4, &0\\ 0, &1
\end{bmatrix}$.
%

%
Lastly, we show that $\gamma_{VV}$ is dimension stable.

\begin{theorem}
    The Van Valen's coefficient of variation is dimension stable.
    In particular, given a sequence of probability measures $\{\mu^{(n)}\}_{n\in\enne}$ as in Definition \ref{def:dimstable} and denoted with $L$ the limit coefficient of variation of the marginals of the sequence, that is $\lim_{n\to\infty}\frac{\sigma_{n,n}^{(n)}}{|m_n^{(n)}|}=L$, we have that
    \begin{equation}
        \lim_{n\to\infty}\gamma_{VV}(\mu^{(n)})=\sqrt{L}.
    \end{equation}
\end{theorem}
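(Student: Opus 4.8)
The plan is to use the independence of the coordinates to collapse $\gamma_{VV}$ into a ratio of partial sums, and then to extract the limit by a Ces\`aro/Stolz--Ces\`aro argument, mirroring the computation in \eqref{eq:hdns} carried out for $G_2$.

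First I would translate the hypotheses of Definition \ref{def:dimstable}. Since the entries of $\bX^{(n)}$ are mutually independent, the covariance matrix of $\mu^{(n)}$ is diagonal, so its trace is the sum of the marginal variances while $\bm^T\bm$ is the sum of the squared marginal means. The consistency requirement---that the marginal of $\mu^{(n)}$ on the first $n-1$ coordinates equals $\mu^{(n-1)}$---ensures that these are honest partial sums, in which passing from $n-1$ to $n$ appends exactly the new $n$-th term. Writing $s_i:=(\sigma^{(i)}_{i,i})^2$ and $\ell_i:=(m^{(i)}_i)^2$ for brevity, this yields
\[
\gamma_{VV}(\mu^{(n)})=\sqrt{\frac{tr(\Sigma)}{\bm^T\bm}}=\sqrt{\frac{\sum_{i=1}^n s_i}{\sum_{i=1}^n \ell_i}}=\sqrt{\frac{\frac{1}{n}\sum_{i=1}^n s_i}{\frac{1}{n}\sum_{i=1}^n \ell_i}},
\]
so the problem reduces to understanding the ratio of the two Ces\`aro averages in the last expression.

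Next I would pass to the limit. The natural tool is the Stolz--Ces\`aro theorem applied to the partial sums $A_n=\sum_{i=1}^n s_i$ and $B_n=\sum_{i=1}^n \ell_i$: when $B_n\to\infty$ one has $\lim A_n/B_n=\lim s_n/\ell_n$ whenever the latter exists, and the hypothesis $\sigma^{(n)}_{n,n}/|m^{(n)}_n|\to L$ is exactly what controls the term ratio $s_n/\ell_n$; taking the square root then produces the stated limit. The step I expect to be the crux---and the one demanding the most care---is precisely the behaviour of this ratio of sums. Stolz--Ces\`aro identifies $\lim A_n/B_n$ with the limiting term ratio only when the denominator $B_n$ diverges, so one must either invoke the structural assumptions that force $B_n\to\infty$ (or that the averaged variances and the averaged squared means converge separately) or dispose of the bounded-denominator regime by hand. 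It is exactly this normalization of the numerator against the denominator that pins down the limiting constant, and it is where the argument must be made rigorous.
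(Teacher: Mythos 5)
Your proposal follows essentially the same route as the paper's proof: independence plus the marginal-consistency condition turn $tr(\Sigma)$ and $\bm^T\bm$ into honest partial sums $\sum_{i=1}^n(\sigma^{(n)}_{i,i})^2$ and $\sum_{i=1}^n(m^{(n)}_i)^2$, and the limit of their ratio is extracted by a Ces\`aro-type argument. The paper simply implements your Stolz--Ces\`aro step by hand: it fixes $\epsilon>0$, picks $N$ so that $(L-\epsilon)(m^{(n)}_n)^2\le(\sigma^{(n)}_{n,n})^2\le(L+\epsilon)(m^{(n)}_n)^2$ for $n\ge N$, splits both sums at $N$, and sandwiches $\gamma_{VV}(\mu^{(n)})$ before letting $n\to\infty$.

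Two remarks. First, the divergence condition you single out as the crux, namely $B_n=\sum_{i=1}^n(m^{(n)}_i)^2\to\infty$, is genuinely needed, and the paper passes over it silently as well: its final ``taking the limit'' only kills the prefix terms $\tfrac1n\sum_{i\le N}$ and traps the limit in $[\sqrt{L-\epsilon},\sqrt{L+\epsilon}]$ provided the tail sums dominate. If the squared means are summable (e.g.\ $\ell_i=4^{-i}$ and $s_i=(1+1/i)\,\ell_i$, so the term ratio tends to $1$), then $\gamma_{VV}(\mu^{(n)})\to\sqrt{\sum_i s_i/\sum_i \ell_i}\neq 1$, so the sandwich conclusion fails; your insistence that this regime be forced away or handled separately is correct caution, not pedantry. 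Second, a bookkeeping point: under the literal hypothesis $\sigma^{(n)}_{n,n}/|m^{(n)}_n|\to L$, your term ratio $s_n/\ell_n$ tends to $L^2$, so Stolz--Ces\`aro yields $\gamma_{VV}(\mu^{(n)})\to L$, not $\sqrt{L}$. The paper's proof instead bounds $(\sigma^{(n)}_{n,n})^2$ by $(L\pm\epsilon)(m^{(n)}_n)^2$, i.e.\ it reads the hypothesis as the \emph{squared} ratio tending to $L$, which is what matches the $\sqrt{L}$ in the statement (the statement and proof of the theorem are themselves inconsistent on this point); when you write ``taking the square root then produces the stated limit,'' make sure the convention you adopt for $s_n/\ell_n$ is the same one used in that final step.
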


\begin{proof}
    By hypothesis, we have that for every $\epsilon>0$ there exists $N$ such that
    \[
    (L-\epsilon)(m^{(n)}_n)^2\le(\sigma^{(n)}_{n,n})^2\le(L+\epsilon)(m^{(n)}_n)^2
    \]
    for every $n\ge N$.
    Thus, for $n\ge N$, we infer that
    \begin{align}
        \gamma_{VV}(\mu^{(n)}) &= \sqrt{\frac{\sum_{i=1}^n(\sigma^{(n)}_{i,i})^2}{\sum_{i=1}^n (m_i^{(n)})^2}} = \sqrt{\frac{\frac{1}{n}\sum_{i=1}^n(\sigma^{(n)}_{i,i})^2}{\frac{1}{n}\sum_{i=1}^n (m_i^{(n)})^2}} = \sqrt{\frac{\frac{1}{n}(\sum_{i=1}^N(\sigma^{(n)}_{i,i})^2+\sum_{i=N}^n(\sigma^{(n)}_{i,i})^2)}{\frac{1}{n}(\sum_{i=1}^N (m_i^{(n)})^2+\sum_{i=N}^n(m_i^{(n)})^2)}}
    \end{align}
    hence
    \begin{align*}
        \sqrt{\frac{\frac{1}{n}(\sum_{i=1}^N(\sigma^{(n)}_{i,i})^2+\sum_{i=N}^n(L-\epsilon)(m_i^{(n)})^2)}{\frac{1}{n}(\sum_{i=1}^N (m_i^{(n)})^2+\sum_{i=N}^n(m_i^{(n)})^2)}}&\le \gamma_{VV}(\mu^{(n)})\\
        &\le \sqrt{\frac{\frac{1}{n}(\sum_{i=1}^N(\sigma^{(n)}_{i,i})^2+\sum_{i=N}^n(L+\epsilon)(m_i^{(n)})^2)}{\frac{1}{n}(\sum_{i=1}^N (m_i^{(n)})^2+\sum_{i=N}^n(m_i^{(n)})^2)}}.
    \end{align*}
    By taking the limit for $n\to\infty$, we conclude the proof.
\end{proof}

\subsection{The Albert and Zhang's Coefficient of Variation} 

First, we notice that the Albert and Zhang's coefficient of variation $\gamma_{AZ}$ is coherent as we have that $\bm^T\Sigma\bm=m^2\sigma^2$ when $n=1$, hence
\[
    \gamma_{AZ}(\mu)=\sqrt{\frac{m^2\sigma^2}{m^4}}=\frac{\sigma}{|m|}
\]
when $\mu$ is the probability distribution supported over $\erre$.
Despite being coherent, $\gamma_{AZ}$ does not possess the rising tide property nor satisfy the cloning property, the SUF property, nor is dimension stable.
Let us start from the rising tide property.
Let $\bX=(X_1,X_2)$ be a random vector such that $\bm=(1,0.1)$ and 
\begin{equation}
\label{eq:sigmaAZRT}
    \Sigma=\begin{bmatrix}
        1, &0\\
        0, &100
    \end{bmatrix}.
\end{equation} 
It is then easy to see that $\gamma_{AZ}(\bX)=\frac{2}{(1.001)^2}<\sqrt{2}$.
Let us then consider $\bc=(0,0.99)$.
Since $\Sigma$ is a diagonal matrix, we have that $\bc^T\Sigma^{-1}\bm>0$. Moreover, the mean of $\bX+\bc$ is $(1,1)$ and its covariance matrix remains $\Sigma$ as in \eqref{eq:sigmaAZRT}.
We then have
\[
    \gamma_{AZ}(\bX+\bc)=\sqrt{\frac{(1,1)^T\Sigma(1,1)}{((1,1)^T(1,1))^2}}=\frac{101}{4}>2>\gamma_{AZ}(\bX),
\]
which shows  that $\gamma_{AZ}$ does not possess the rising tide property.
We now move to the cloning property.
Given $\bX$ a random vector with mean $\bm$ and covariance $\Sigma$, then we have that an independent coupling $(\bX,\bX)$ has mean $(\bm,\bm)$ and covariance matrix as in \eqref{eq:double_cov}.
We then have that $(\bm,\bm)^T\Sigma_{(\bX,\bX)}(\bm,\bm)=2\bm^T\Sigma\bm$, hence
\[
\gamma_{AZ}\big((\bX,\bX)\big)=\sqrt{\frac{2\bm^T\Sigma\bm}{(2\bm^T\bm)^2}}=\frac{1}{\sqrt{2}}\gamma_{AZ}(\bX).
\]
%

%
%

%
To show that $\gamma_{AZ}$ does not possess the SUF property, it suffices to mimic the argument used for $\gamma_R$ and $\gamma_{VV}$.
Indeed, consider a Gaussian vector $\bX=(X_1,X_2)$ with mean $\bm=(1,1)$ and covariance matrix $\Sigma=Id_2$.
Toward assume that there exists $G$ such that $\gamma_{AZ}(\bX)=G(CV(X_1),CV(X_2))$, then we have
\[
    \sqrt{\frac{1}{2}}=\gamma_{AZ}(\bX)=G(1,1)=\gamma_{AZ}((2X_1,X_2))=\sqrt{\frac{17}{25}},
\]
which allows us to conclude that $\gamma_{AZ}$ does not possess the SUF property.
To conclude, we notice that $\gamma_{AZ}$ is not dimension stable.
As per the $\gamma_R$, consider a sequence of independent Gaussian distributions $\mu^{(n)}$ that satisfies the condition outlined in Definition \ref{def:dimstable} and such that every marginal $\mu^{(n)}_i$ has mean $m$ and variance $\sigma^2$.
By definition, we have that
\[
\gamma_{AZ}(\mu^{(n)})=\sqrt{\frac{nm^2\sigma^2}{(nm^2)^2}}=\frac{\sigma}{\sqrt{n}|m|}=0
\]
for every $n$, we thus conclude $\lim_{n\to\infty}\gamma_{AZ}(\mu^{(n)})=0$.

\begin{remark}[Correcting $\gamma_R$ and $\gamma_{AZ}$]
    As we have seen, both $\gamma_R$ and $\gamma_{AZ}$ are not dimension stable.
    However, as for $\gamma_{VN}$, this property can be recovered by adding a corrective term $\sqrt{n}$ to the two MCVs.
    Let us consider $\gamma_{AZ}$.
    Given a sequence $\{\mu^{(n)}\}_{n\in\enne}$ as in Definition \ref{def:dimstable}, we have
    \[
    \sqrt{n}\gamma_{AZ}(\mu^{(n)})=\sqrt{\frac{n\bm^T\Sigma\bm}{(\bm^T\bm)^2}}=\sqrt{\frac{\frac{1}{n}\sum_{i=1}^n m_i^2\sigma^2_i}{(\frac{1}{n}\sum_{i=1}^n m_i^2)^2}}.
    \]
    Owing again to the properties of Cesaro's sums, it is easy to see that 
    \[
    \lim_{n\to\infty}\sqrt{n}\gamma_{AZ}(\mu^{(n)})=\lim_{n\to\infty}CV(X_n^{(n)}).
    \]
    Thus $\sqrt{n}\gamma_{AZ}$ is dimension stable.
    Let us now consider $\gamma_R$ and $\{\mu^{(n)}\}_{n\in\enne}$ a sequence as in Definition \ref{def:dimstable}, we then have
    \[
    \sqrt{n}\gamma_{R}(\mu^{(n)})=\sqrt{\frac{n(det(\Sigma))^{\frac{1}{n}}}{\bm^T\bm}}=\sqrt{\frac{ (\prod_{i=1}^n\sigma_i^2)^{\frac{1}{n}}}{\frac{1}{n}\sum_{i=1}^n m_i^2}}.
    \]
    Let us consider $(\prod_{i=1}^n\sigma_i^2)^{\frac{1}{n}}$, by taking the logarithm, we have that 
    \[
    ln\big((\prod_{i=1}^n\sigma_i^2)^{\frac{1}{n}}\big)=\frac{1}{n}\sum_{i=1}^n ln(\sigma^2_i).
    \]
    Thus, owing again to the properties of Cesaro's sums, we have that
    \[
    \lim_{i\to\infty}\sqrt{n}\gamma_{R}(\mu^{(n)})=\lim_{i\to\infty}CV(X_i^{(n)}).
    \]
    Thus $\sqrt{n}\gamma_{R}$ is dimension stable.
\end{remark}

\subsection{The T Coefficient of Variation} 
We now consider the multivariate measure of inequality recently introduced in \cite{toscani2024}:
\[
\mathcal{G}(\mu)=\sqrt{\frac{1}{2\bm^T\Sigma^{-1}\bm}}\int_{\erre^n}\int_{\erre^n}\sqrt{(\bx-\by)^T\Sigma^{-1}(\bx-\by)}\mu(d\bx)\mu(d\by).
\]

It is easy to see that it is scale invariant, satisfies the rising tide property, and the cloning property.
Notice however that $\mathcal{G}$ is not coherent.
Indeed, given a probability measure $\mu\in\PP(\erre)$ with mean $m$ and variance $\sigma^2$, we have that
\[
    \mathcal{G}(\mu)=\frac{\sigma}{\sqrt{2}|m|}\int_{\erre}\int_{\erre}\frac{|x-y|}{\sigma}\mu(dx)\mu(dy)=CV(\mu)\int_{\erre}\int_{\erre}\frac{|x-y|}{\sqrt{2}\sigma}\mu(dx)\mu(dy)\neq CV(\mu).
\]

To conclude we show that, albeit $\mathcal{G}(\mu)\neq\sqrt{n}\gamma_{VN}(\mu)$, we have that asymptotically $\mathcal{G}$ behaves as $G_2$ on any sequence of probability measures $\{\mu^{(n)}\}_{n\in\enne}$ as in Definition \ref{def:dimstable}.

\begin{theorem}
    There exists a constant $c$ such that
    \[
    c\sqrt{n}\;\gamma_{VN}(\mu^{(n)})\le \mathcal{G}(\mu^{(n)}) \le \sqrt{n}\;\gamma_{VN}(\mu^{(n)}),
    \]
    for any given a sequence of probability measures $\{\mu^{(n)}\}_{n\in\enne}$ as in Definition \ref{def:dimstable} such that $\mu^{(n)}\in\PP_3(\erre^n)$ for every $n\in\enne$.
     In particular, $\mathcal{G}(\mu) \sim O(\sqrt{n})\gamma_{VN}(\mu)$.
\end{theorem}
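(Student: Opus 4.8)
The plan is to strip both quantities down to a single scalar comparison between the $L^1$ and $L^2$ norms of a whitened difference vector, prove the easy upper bound by Jensen, and obtain the lower bound from a moment-interpolation argument that is exactly where the $\PP_3$ hypothesis is consumed.

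First I would whiten. Writing $\Sigma^{-1}=\Sigma^{-1/2}\Sigma^{-1/2}$ gives $(\bx-\by)^T\Sigma^{-1}(\bx-\by)=\|\Sigma^{-1/2}(\bx-\by)\|_2^2$ and $\bm^T\Sigma^{-1}\bm=\|\Sigma^{-1/2}\bm\|_2^2$. Taking $\bX,\bY$ i.i.d.\ with law $\mu^{(n)}$ and setting $Z=\Sigma^{-1/2}(\bX-\bY)$, the integral defining $\mathcal{G}$ is exactly $\mathbb{E}[\|Z\|_2]$, so that
\[
\frac{\mathcal{G}(\mu^{(n)})}{\sqrt{n}\,\gamma_{VN}(\mu^{(n)})}=\frac{\mathbb{E}[\|Z\|_2]}{\sqrt{2n}}.
\]
Because the entries of $\mu^{(n)}$ are independent, $\Sigma$ is diagonal, $\Sigma^{-1/2}$ rescales each coordinate to unit variance, and each $Z_i$ has mean zero and variance $2$; hence $\mathbb{E}[\|Z\|_2^2]=\sum_{i=1}^n\mathbb{E}[Z_i^2]=2n$. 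The whole statement therefore reduces to $c\le \mathbb{E}[\|Z\|_2]/\sqrt{2n}\le 1$. The upper bound is immediate from Jensen's inequality (concavity of $t\mapsto\sqrt t$): $\mathbb{E}[\|Z\|_2]\le(\mathbb{E}[\|Z\|_2^2])^{1/2}=\sqrt{2n}$, which gives $\mathcal{G}(\mu^{(n)})\le\sqrt n\,\gamma_{VN}(\mu^{(n)})$.

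For the lower bound I would interpolate the second moment between the first and the third by Lyapunov's inequality, $(\mathbb{E}[\|Z\|_2^2])^2\le \mathbb{E}[\|Z\|_2]\,\mathbb{E}[\|Z\|_2^3]$, which rearranges to
\[
\mathbb{E}[\|Z\|_2]\ge\frac{(\mathbb{E}[\|Z\|_2^2])^2}{\mathbb{E}[\|Z\|_2^3]}=\frac{4n^2}{\mathbb{E}[\|Z\|_2^3]}.
\]
It then suffices to bound $\mathbb{E}[\|Z\|_2^3]=\mathbb{E}[(\sum_i Z_i^2)^{3/2}]$ by a multiple of $n^{3/2}$. This is where $\mu^{(n)}\in\PP_3(\erre^n)$ enters, ensuring $\mathbb{E}|Z_i|^3<\infty$; applying Minkowski's inequality in $L^{3/2}$ to the sum of the nonnegative terms $Z_i^2$ yields
\[
\mathbb{E}[\|Z\|_2^3]=\Big\|\sum_{i=1}^n Z_i^2\Big\|_{L^{3/2}}^{3/2}\le\Big(\sum_{i=1}^n(\mathbb{E}|Z_i|^3)^{2/3}\Big)^{3/2}.
\]
The consistency (nestedness) of the family makes $(\mathbb{E}|Z_i|^3)_i$ a fixed sequence independent of $n$, so the Cesàro average $C_n:=\tfrac1n\sum_{i=1}^n(\mathbb{E}|Z_i|^3)^{2/3}$ remains bounded, giving $\mathbb{E}[\|Z\|_2^3]\le C_n^{3/2}\,n^{3/2}$ and hence $\mathbb{E}[\|Z\|_2]\ge 4\sqrt n/C_n^{3/2}$. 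Thus the ratio is bounded below by $c:=4/(\sqrt2\,\sup_n C_n^{3/2})>0$, and combining with the upper bound proves $c\sqrt n\,\gamma_{VN}\le\mathcal{G}\le\sqrt n\,\gamma_{VN}$, whence $\mathcal{G}(\mu)\sim O(\sqrt n)\,\gamma_{VN}(\mu)$.

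I expect the main obstacle to be the control of $\mathbb{E}[\|Z\|_2^3]$: the clean $n^{3/2}$ scaling rests on the $L^{3/2}$-Minkowski step \emph{together with} the uniform boundedness in $n$ of the Cesàro-averaged whitened third moments, and it is precisely the $\PP_3$ assumption combined with the consistency of the sequence that is meant to supply this. Without such uniform moment control the constant $c$ could degenerate to zero, so the hypothesis $\mu^{(n)}\in\PP_3$ is doing the essential work; the upper bound, by contrast, is unconditional.
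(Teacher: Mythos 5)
Your proposal is correct in its overall strategy and, for the lower bound, takes a genuinely different route from the paper. The upper bound is identical: the paper also deduces $\mathcal{G}(\mu)\le G_2(\mu)=\sqrt{n}\,\gamma_{VN}(\mu)$ from Jensen's inequality. For the lower bound, however, the paper works coordinatewise: it uses $\|z\|_2\ge n^{-1/2}\|z\|_1$, a per-coordinate Jensen step to reduce to the mean absolute deviation $M_1^{(i)}=\int_{\erre^n}|x_i^*-m_i^*|\,\mu^*(d\bx^*)$ of each whitened coordinate, and then a truncation argument ($1=\mathbb{E}|X_i^*-m_i^*|^2\le R^{-1}K_3^{(i)}+R\,M_1^{(i)}$, optimized in $R$) giving $M_1^{(i)}\ge 1/(4K_3^{(i)})$, where $K_3^{(i)}$ is the whitened third absolute central moment. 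Your single global interpolation $(\mathbb{E}\|Z\|_2^2)^2\le\mathbb{E}\|Z\|_2\,\mathbb{E}\|Z\|_2^3$ is the same moment-interpolation idea applied once rather than $n$ times, and your $L^{3/2}$-Minkowski control of $\mathbb{E}\|Z\|_2^3$ replaces the paper's $\ell_1$/$\ell_2$ comparison; note also that $\|Wz\|_2$ is the same for any whitening matrix (since $W^TW=\Sigma^{-1}$), so your use of $\Sigma^{-1/2}$ in place of the ZCA matrix is harmless. Your route is in fact slightly more economical in hypotheses: you only need the Ces\`aro averages $C_n=\tfrac1n\sum_{i=1}^n(\mathbb{E}|Z_i|^3)^{2/3}$ to be bounded, whereas the paper's constant $c=1/(4K_3)$ implicitly requires $\sup_i K_3^{(i)}<\infty$.

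One inference in your write-up is flawed, though: nestedness does make each $\mathbb{E}|Z_i|^3$ a fixed number independent of $n$ (using independence, so whitening is coordinatewise division by $\sigma_i$), but a fixed sequence of finite numbers need \emph{not} have bounded Ces\`aro means. For instance, unit-variance coordinates with $\mathbb{E}|Z_i|^3\asymp i$ give $C_n\asymp n^{2/3}\to\infty$, and your constant $c=4/(\sqrt{2}\,\sup_n C_n^{3/2})$ degenerates to zero; the hypothesis $\mu^{(n)}\in\PP_3(\erre^n)$ guarantees only finiteness of each term, not uniformity. To be fair, the paper's own proof contains the identical gap: it writes $c:=1/(4K_3)$ while suppressing the dependence of $K_3$ on the coordinate index, tacitly assuming a uniform third-moment bound, so your argument meets the paper's standard of rigor. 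For an airtight statement, you (and the paper) should add $\sup_n C_n<\infty$ (or $\sup_i\mathbb{E}|Z_i|^3<\infty$) as an explicit hypothesis rather than derive it from consistency.
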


\begin{proof}
    Let $\mu\in\PP_3(\erre)$, by Jensen's inequality, we have that
    \begin{align*}
        \mathcal{G}(\mu)&=\sqrt{\frac{1}{2\bm^T\Sigma^{-1}\bm}}\int_{\erre^n}\sqrt{(\bx-\by)^T\Sigma^{-1}(\bx-\by)}\mu(d\bx)\mu(d\by)\\
        &=\sqrt{\frac{1}{2\bm^T\Sigma^{-1}\bm}}\Bigg(\bigg(\int_{\erre^n}\int_{\erre^n}\sqrt{(\bx-\by)^T\Sigma^{-1}(\bx-\by)}\mu(d\bx)\mu(d\by)\bigg)^2\Bigg)^{\frac{1}{2}}\\
        &\le \sqrt{\frac{1}{2\bm^T\Sigma^{-1}\bm}}\Bigg(\int_{\erre^n}\int_{\erre^n}(\bx-\by)^T\Sigma^{-1}(\bx-\by)\mu(d\bx)\mu(d\by)\Bigg)^{\frac{1}{2}}\\
        &=G_2(\mu)=\sqrt{n}\;\gamma_{VN}(\mu),
    \end{align*}
    which allows us to conclude one half of the proof.
    Let us now consider the other inequality and let $W_\mu^{ZCA}$ be the ZCA-cor whitening matrix associated to $\mu$.
    Then, by the change of variable $\bx^*=W_\mu^{ZCA}\bx$ and $\by^*=W_\mu^{ZCA}\by$, we have that
    \begin{align*}
        \int_{\erre^n}\int_{\erre^n}\sqrt{(\bx-\by)^T\Sigma^{-1}(\bx-\by)}\mu(d\bx)\mu(d\by)=\int_{\erre^n}\int_{\erre^n}\sqrt{(\bx^*-\by^*)^T(\bx^*-\by^*)}\mu^*(d\bx^*)\mu^*(d\by^*),
    \end{align*}
    where $\mu^*$ is the whitened probability measure associated with $\mu$.
    We then have that
    \begin{align*}
        \int_{\erre^n}\int_{\erre^n}\sqrt{(\bx^*-\by^*)^T(\bx^*-\by^*)}\mu^*(d\bx^*)&\mu^*(d\by^*)\\
        &\ge\frac{1}{\sqrt{n}} \int_{\erre^n}\int_{\erre^n}\sum_{i=1}^{n}|x^*_i-y^*_i|\mu^*(d\bx^*)\mu^*(d\by^*)\\
        &=\frac{1}{\sqrt{n}}\sum_{i=1}^{n} \int_{\erre^n}\int_{\erre^n}|x^*_i-y^*_i|\mu^*(d\bx^*)\mu^*(d\by^*)\\
        &\ge \frac{1}{\sqrt{n}}\sum_{i=1}^{n} \int_{\erre^n}|x^*_i-m^*_i|\mu^*(d\bx^*)
    \end{align*}
    where the last inequality comes from Jensen's inequality, since
    \[
        \int_{\erre^n}|x_i^*-y_i^*|\mu^*(d\by^*)\ge \bigg|x^*_i-\int_{\erre^n}y^*_i\mu^*(d\by^*)\bigg|=\big|x^*_i-m^*_i\big|.
    \]
    To conclude, it suffice to show that
    \[
    \int_{\erre^n}|x_i^*-m_i^*|\mu^*(d\bx^*)\ge c,
    \]
    for every $i=1,\dots,n$.
    Indeed, for every $R>0$, we have that
    \begin{align*}
        1&=\int_{\erre^n}|x_i^*-m_i^*|^2\mu^*(d\bx^*)\\
        &=\int_{\{|x^*_i-m^*_i|\ge R\}}|x^*_i-m^*_i|^2\mu^*(d\bx^*) + \int_{\{|x^*_i-m^*_i| < R\}}|x^*_i-m^*_i|^2\mu^*(d\bx^*)\\
        &\le \frac{1}{R}\int_{\{|x^*_i-m^*_i|\ge R\}}|x^*_i-m^*_i|^3\mu^*(d\bx^*)+ R\int_{\{|x^*_i-m^*_i| < R\}}|x^*_i-m^*_i|\mu^*(d\bx^*)\\
        &\le \frac{1}{R}\int_{\erre^n}|x^*_i-m^*_i|^3\mu^*(d\bx^*)+ R\int_{\erre^n}|x^*_i-m^*_i|\mu^*(d\bx^*)\\
        &\le \frac{1}{R}K_3+ RM_1\\
    \end{align*}
    where $M_1=\int_{\erre^n}|x^*_i-m^*_i|\mu^*(d\bx^*)$ and $K_3=\int_{\erre^n}|x^*_i-m^*_i|^3\mu^*(d\bx^*)$.
    Finally, we search for the value of $\bar R$ that minimizes $\frac{1}{R}K_3+ RM_1$.
    A simple computation shows that $\bar R = \sqrt{\frac{K_3}{M_1}}$, hence we have $1\le 2\sqrt{K_3 M_1}$, therefore
    \begin{equation}
        \int_{\erre^n}|x_i^*-m^*_i|\mu^*(d\bx^*)\ge c:=\frac{1}{4K_3}.
    \end{equation}
    We then conclude that
    \begin{align*}
        \int_{\erre^n}\int_{\erre^n}\sqrt{(\bx-\by)^T\Sigma^{-1}(\bx-\by)}\mu(d\bx)\mu(d\by)&\ge \frac{1}{\sqrt{n}}\sum_{i=1}^{n} \int_{\erre^n}|x^*_i-m^*_i|\mu^*(d\bx^*)\\
        &\ge \frac{1}{\sqrt{n}}\sum_{i=1}^{n} \frac{1}{4K_3}=\frac{\sqrt{n}}{4K_3},
    \end{align*}
    which concludes the proof.    
\end{proof}

To conclude, note that, by assuming $\mu$ regular enough, the same argument can be extended to any $G_q$ index to show that $G_q(\mu^{(n)})\sim O(\sqrt{n})\gamma_{VN}(\mu^{(n)})$ for every sequence of probability measures as in Definition \ref{def:dimstable}.

\section{Simulation studies}
\label{sec:6}

We complement our theoretical results with two numerical experiments. 
In the first experiment, we compute the multivariate coefficients of variation for a sequence of random samples simulated from multivariate Gaussian vectors of increasing dimension. 
In the second experiment, we consider a sequence of random vectors that describes the trajectory of $100$ points which move according to a Galton-like time series and compute the multivariate coefficients of variation at the the  end points.

\subsection{ Multivariate Gaussian Distribution}

In this first experiment, we consider an equally spaced sequence of dimensions $n=10,15,\dots,50$ and, for each of them, we sample $500$ points from a $n$-dimensional Gaussian vector, with covariance matrix $\Sigma=2Id_n$, where $Id_n$ is the $n\times n$ identity matrix and with mean either: \begin{enumerate*}[label=(\roman*)]
    \item $\bm=(2,\dots,2)\in\erre^n$; or
    \item $\bm$ an $n$-dimensional vector whose entries are sampled uniformly in $[1,2]$.
\end{enumerate*}

From the theoretical results presented in Section \ref{sec:ourCV} and \ref{sec:other}, we expect the five coefficients of variation to converge at different limits. 
Figures \ref{fig:Gaussian1} and \ref{fig:Gaussian2}  presents the results for the simulated data. 

\begin{figure}[t!]
    \begin{minipage}{0.475\textwidth}
        \centering
        \includegraphics[width=\textwidth]{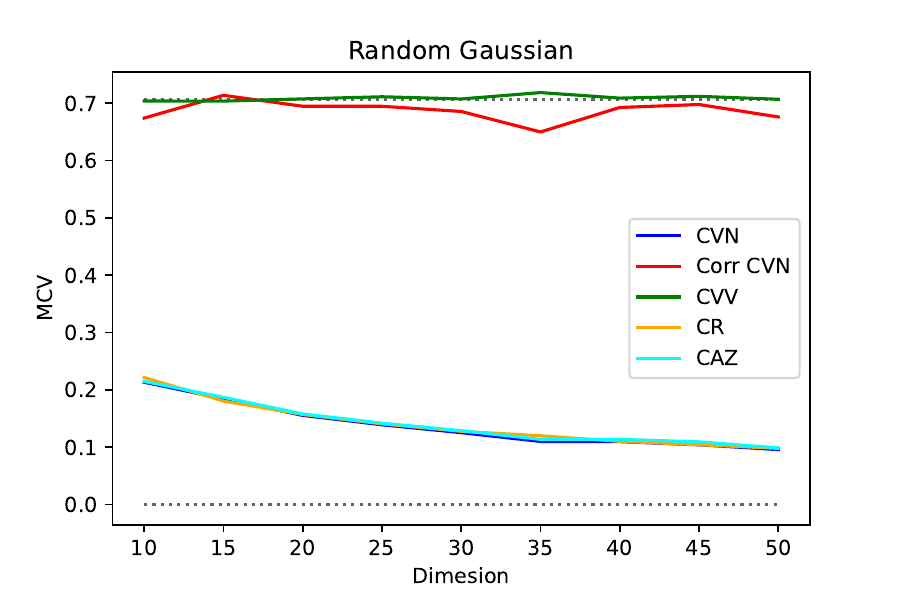}
        \caption{ Multivariate Coefficients of Variation for the $n$-dimensional Gaussian samples with means $\bm=(2,\dots,2)$.}
        \label{fig:Gaussian1}
    \end{minipage}\hfill
    \begin{minipage}{0.475\textwidth}
        \centering
        \includegraphics[width=\textwidth]{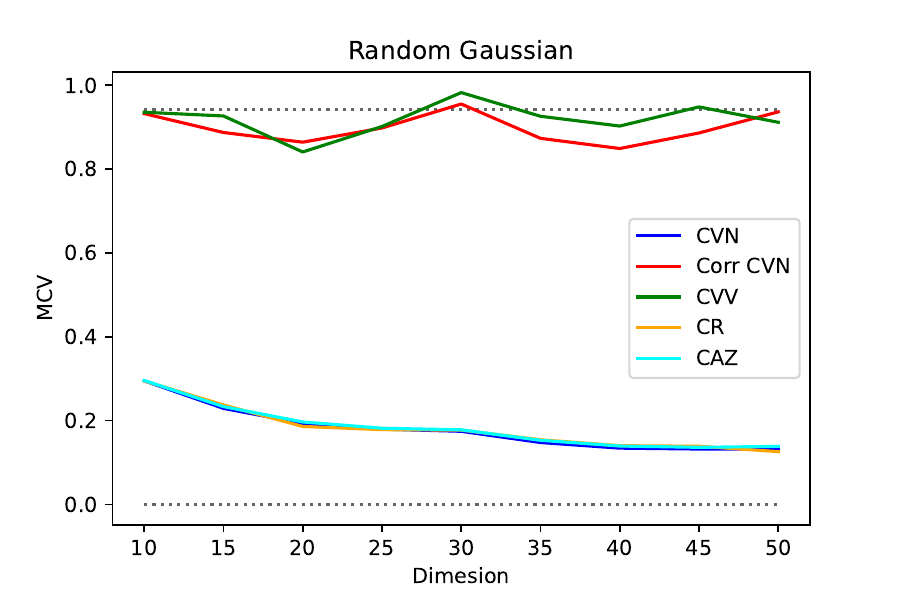}
        \caption{Multivariate Coefficients of Variation for the $n$-dimensional Gaussian samples with means sampled from a uniform distribution.}
        \label{fig:Gaussian2}
    \end{minipage}
\end{figure}

Figures \ref{fig:Gaussian1} and \ref{fig:Gaussian2} show that, in  line with the theoretical results: 
\begin{itemize}
    \item  $\gamma_{VN}$, $\gamma_{AZ}$, and $\gamma_{R}$ converge to zero at a rate equal to $\frac{1}{\sqrt{n}}$, as the dimension $n$ increases;  regardless of the mean values;
    \item $\gamma_{VV}$ and $G_2 = \gamma_{CorrVN}$ converge to $\sqrt{\frac{\sigma^2}{m^2}}=\frac{1}{\sqrt{2}}\sim 0.71$ when $\bm=(2,\dots,2)$ and to $\frac{\sqrt{2}}{\frac{3}{2}}=\frac{2\sqrt{2}}{3}\sim0.94$ when the means are sampled from a uniform distribution. 
    \item $\gamma_{VN}$, $\gamma_{AZ}$, and $\gamma_{R}$ converge to a value that is independent from the means of the distribution; whereas the limiting value of
  $\gamma_{VV}$ and $\gamma_{CorrVN}$ depend, more correctly, on the means, with a higher value in the case of a variable mean, whose sequence fluctuates more.  
\end{itemize} 

\subsection{Galton-like Trajectories}

In the second experiment, we consider a set of  particles whose position changes at every time step, under the effect of a random variable.
More precisely, if the position of a particle at time step $t$ is $x_t$, at the next time step the particle will move one unit to the right with probability $0.5$ or one unit to the left with probability $0.5$.
Given a time horizon $T$, each initial position of a particle induces a random trajectory of $T$ points, which we consider as a $T$-dimensional vector.
We denote such  trajectories as \textit{Galton-like trajectories}, as they are inspired from the experiments of Sir Francis Galton.
We consider $100$ particles, with different initial points and, correspondingly, $100$   $T$-dimensional random trajectories, where $T$ is the time-horizon we consider: the number of times we update the position of each particle.
In our setting, we consider $T\le90$, and study how the values of the multivariate coefficients of variation (MCV) change as time increases.
The starting positions of each particle are sampled uniformly from $[1,2]$.
In Figure \ref{fig:galton1}, we report the evolution of the five MCVs of the sampled Galtonian Trajectories, for a  sequence of equally spaced times  $T=10, 15, \dots, 85, 90$.
For completeness, we plot  in Figure \ref{fig:galton2} the simulated trajectories. 

\begin{figure}[t!]
    \centering
    \begin{minipage}{0.475\textwidth}
        \centering
        \includegraphics[width=\textwidth]{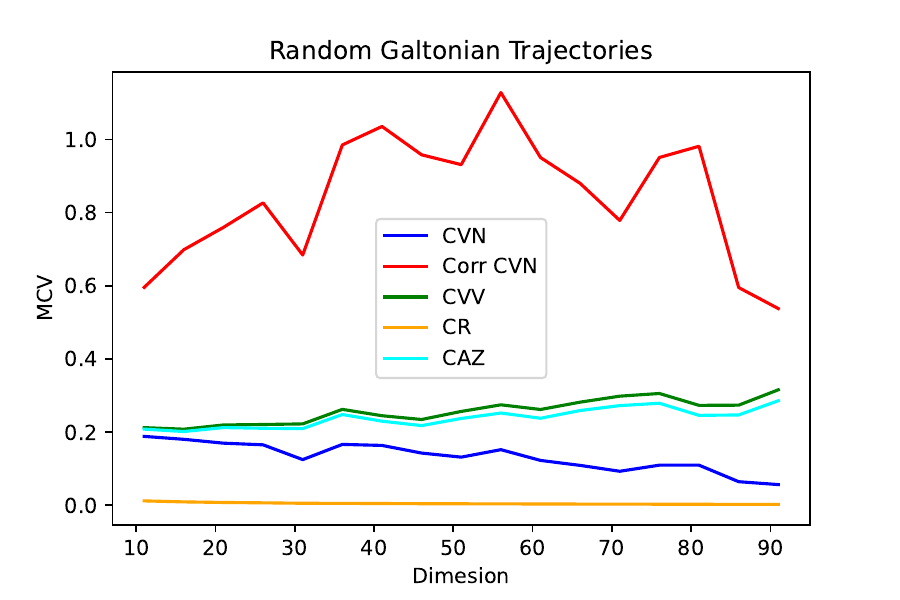}
        \caption{Multivariate Coefficients of Variation for the Galtonian Trajectories.}
        \label{fig:galton1}
    \end{minipage}\hfill
    \begin{minipage}{0.475\textwidth}
        \centering
        \includegraphics[width=\textwidth]{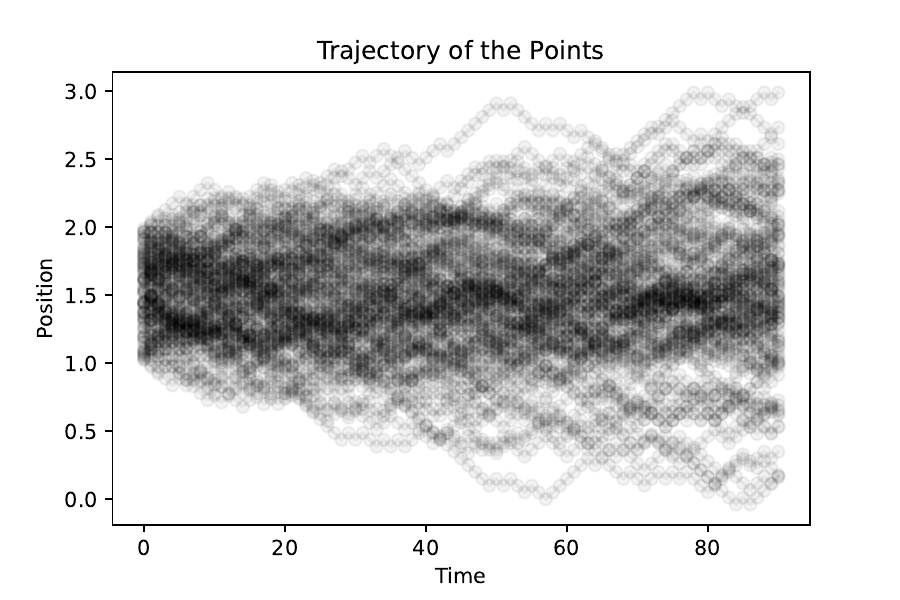}
        \caption{Visualization of the Galtonian Trajectories.}
        \label{fig:galton2}
    \end{minipage}
\end{figure}

Figure \ref{fig:galton1} shows that $\gamma_{R}$ and $\gamma_{VN}$ converge to zero as $T$ increases. Although this is in line with the theoretical results presented in \ref{sec:ourCV} and \ref{sec:other}, it does not seem intuitive, as the initial points are different.
On the other hand, $\gamma_{VV}$ and $\gamma_{CorrVN}$, and $\gamma_{AZ}$ converge, although slowly, to a non-zero value, in line with their dimension stability.
Note that $\gamma_{AZ}$ does not converge to zero, differently from what observed for the first experiment. 
This is in line with the fact that Galton trajectories do not satisfy the requirement of Definition \ref{def:dimstable} as its entries are not independent.
%

\subsection{Computational complexity}
 To conclude, we comment on the computational complexity of the considered multivariate coefficients of variation.  
From a theoretical viewpoint, computing the trace of a matrix is much cheaper than computing its determinant or its inverse matrix, especially as the dimension of the matrix increases.
From a computational viewpoint, in our second experiment we have considered very large matrices, of dimension up to  $90$ but, however, we were able to compute all coefficients of variation in a matter of seconds, using a personal laptop and a standard Python software.
This suggests that, from a  computational viewpoint, no multivariate coefficient of variation has a significant computational advantage over the others.
%


\subsection*{Funding}
The paper is the result of a close collaboration between the three authors. The work of PG was funded by the European Union - NextGenerationEU, in the framework of the GRINS- Growing Resilient, INclusive and Sustainable (GRINS PE00000018). G.T. acknowledges partial support of  IMATI (Institute of Applied  Mathematics and Information Technologies ``Enrico Magenes'').

\bibliographystyle{alpha}
\bibliography{sample}

\end{document}